\theoremstyle{plain}
\newtheorem{thm}{Theorem}[section]
\newtheorem{cor}[thm]{Corollary}
\newtheorem{lem}[thm]{Lemma}
\newtheorem{rem}[thm]{Remark}
\newtheorem{ques}[thm]{Question}
\newtheorem{conj}[thm]{Conjecture}
\newtheorem{prob}[thm]{Problem}
\def\bbb{\mathbb}
\def\op{\operatorname}
\renewcommand{\phi}{\varphi}
\newcommand{\N}{\bbb{N}}
\newcommand{\Z}{\bbb{Z}}
\newcommand{\Q}{\bbb{Q}}
\let\@@pmod\pmod
\DeclareRobustCommand{\pmod}{\@ifstar\@pmods\@@pmod}
\def\@pmods#1{\mkern4mu({\operator@font mod}\mkern 6mu#1)}
\begin{document}
\title[On the Diophantine equation $\sigma_{2}(\overline{X}_{n})=\sigma_{n}(\overline{X}_{n})$]{On the Diophantine equation $\sigma_{2}(\overline{X}_{n})=\sigma_{n}(\overline{X}_{n})$}
\author{Piotr Miska}
\author{Maciej Ulas}

\keywords{symmetric polynomials, Diophantine equations, counting solutions} \subjclass[2010]{11D45, 11A99}
\thanks{Research of the authors was supported by a grant of the National Science Centre (NCN), Poland, no. UMO-2019/34/E/ST1/00094}

\begin{abstract}
In this note we investigate the set $S(n)$ of positive integer solutions of the title Diophantine equation. In particular, for a given $n$ we prove boundedness of the number of solutions, give precise upper bound on the common value of $\sigma_{2}(\overline{X}_{n})$ and $\sigma_{n}(\overline{X}_{n})$ together with the biggest value of the variable $x_{n}$ appearing in the solution. Moreover, we enumerate all solutions for $n\leq 16$ and discuss the set of values of $x_{n}/x_{n-1}$ over elements of $S(n)$.
\end{abstract}

\maketitle

\begin{center}
    Dedicated to the memory of professor Andrzej Schinzel.
\end{center}

\section{Introduction}\label{sec1}

Let $\N$ be the set of non-negative integers, $\N_{+}$ the set of positive integers and for given $k\in\N$ we define $\N_{\geq k}$ as the set of integers $\geq k$. Moreover, for a given set $A$, by $|A|$ we denote the number of elements of the set $A$.

Let $n\in\N_{\geq 3}$ and for given $k\leq n$ consider the $k$-th symmetric polynomial
$$
\sigma_{k}(x_{1}, \ldots, x_{n})=\sum_{i_{1}<i_{2}<\ldots<i_{k}}x_{i_{1}}\cdots x_{i_{k}}.
$$
In the sequel, for simplicity of notation, we will write $\overline{1}_{k}$ instead of $(1,\ldots, 1)$, where the number of 1's is equal to $k$.

The question whether the sum of elements of a given finite set can be equal to the product of these elements is a classical one. Equivalently, we ask for which $n\in\N_{\geq 2}$ the  Diophantine equation
$$
\sigma_{1}(\overline{X}_{n})=\sigma_{n}(\overline{X}_{n})
$$
has a solution in positive integers $x_{1},\ldots, x_{n}$. This question was investigated by many authors. For each $n\geq 3$ the equation has a solution $(\overline{1}_{n-2}, 2, n)$. In particular, $N(n)\geq 1$, where $N(n)$ is the number of solutions. Schinzel showed that there is no other solution for $n=6$ or $n=24$. He also investigated the existence of rational solutions in the case of $n=3$ and proved that for each $m$ there are at least $m$ triples of integers with the same sum and the same product \cite{Sch}. Misiurewicz has shown that $n=2, 3, 4, 6, 24, 114, 174, 444$ are the only values of $n<10^3$ for which $N(n)=1$ \cite{Mi} (recall that the value 114 was misprinted as 144 in \cite{Mi}). This was later extended by Brown and by Singmaster, Bennett and Dunn to $n\leq 1444000$. All these results were improved by Weingartner. He proved that $N(n)>1$ for $444<n<10^{11}$ \cite{Wei}. This was possible due to connection with Sophie-Germain primes \cite{Br} (see also \cite{Nyb}). The question whether there is a value $n>444$ such that $N(n)=1$ is still open. A nice exposition of the basic results concerning this problem can be found in \cite{Eck}. For more on the history of this problem and related investigations see section D24 in \cite{Guy}.

Motivated by the findings concerning the equation $\sigma_{1}(\overline{X}_{n})=\sigma_{n}(\overline{X}_{n})$ it is natural to ask a question about the existence of positive integer solutions of the Diophantine equation
\begin{equation}\label{maineq}
\sigma_{2}(\overline{X}_{n})=\sigma_{n}(\overline{X}_{n}).
\end{equation}
Let us note that (\ref{maineq}) is equivalent with the Diophantine equation
$$
\sigma_{n-2}\left(\frac{1}{x_{1}},\ldots,\frac{1}{x_{n}}\right)=1
$$
which need to be solved in positive integers and its solutions give quite special representations of 1 in terms of Egyptian fractions. By a solution of (\ref{maineq}) we mean a sequence $\overline{X}_{n}=(x_{1},\ldots, x_{n})$ satisfying the condition $x_{i}\leq x_{i+1}$ for $i=1,\ldots, n-1$.

In this paper we are interested in the structure of the set
$$
S(n)=\{\overline{X}_{n}\in\N_{+}^{n}:\;\overline{X}_{n}\;\mbox{is a solution of}\;(\ref{maineq})\}
$$
and investigate its various properties. Next, for $i\in\{0,\ldots, n\}$, let us put
$$
S_{i}(n)=\{\overline{X}_{n}\in S(n):\;x_{1}=\ldots=x_{n-i}=1,\;2\leq x_{n-i+1}\leq \ldots \leq x_{n}\},
$$
i.e., $S_{i}(n)\subset S(n)$ contain solutions of (\ref{maineq}) with exactly $i$ terms different than 1. We clearly have the (disjoint) decomposition
$$
S(n)=\bigcup_{i=0}^{n}S_{i}(n).
$$

Let us describe the content of the paper in some details. In Section \ref{sec2} finiteness of the set $S(n)$ is proved together with the maximum value of $x_{n}$ which can appear in the solution $\overline{X}_{n}$ of (\ref{maineq}) (Theorem \ref{mainbound}). We also present a bound for $x_{n-2}$ (Corollary \ref{xneqxn}). Using the obtained results we compute all the solutions of (\ref{maineq}) for $n\leq 16$.

In Section \ref{sec3} we investigate the set $S_{3}(n)$ in details. Our findings allow us to prove that
$$
|S(n)|\geq |S_{3}(n)|\geq \frac{1}{2}\tau\left(\frac{1}{2}(n-2)(3n-1)\right),
$$
where $\tau(m)$ is the number of divisors of a positive integer $m$. Moreover, we investigate the behaviour of $x_{n}/x_{n-1}$, where $x_{n-1}, x_{n}$ are components of $\overline{X}_{n}\in S(n)$. In particular, we prove that the set of rational values $x_{n}/x_{n-1}$, where $\overline{X}_{n}\in S(n)$ and $n\in\N_{\geq 3}$, is dense in the set $[1,+\infty)$.

Finally, in the last section  we state several questions and conjectures which, we believe, will motivate further investigations.

\section{Boundedness of $x_{n}$ and enumeration of $S(n)$ for $n\leq 16$}\label{sec2}

Before we start, we mention two basic identities involving symmetric polynomials. More precisely, we have
\begin{equation}\label{sigmaid1}
\sigma_{1}(\overline{X}_{k_{1}},\overline{Y}_{k_{2}})=\sigma_{1}(\overline{X}_{k_{1}})+\sigma_{1}(\overline{Y}_{k_{2}})
\end{equation}
and
\begin{equation}\label{sigmaid2}
\sigma_{2}(\overline{X}_{k_{1}},\overline{Y}_{k_{2}})=\sigma_{1}(\overline{X}_{k_{1}})\sigma_{1}(\overline{Y}_{k_{2}})+\sigma_{2}(\overline{X}_{k_{1}})+\sigma_{2}(\overline{Y}_{k_{2}}),
\end{equation}
where $\overline{X}_{k_{1}}, \overline{Y}_{k_{2}}$ are independent sets of variables. In the sequel we will use these identities several times.

We start with a simple bound for $x_{1}\cdot\ldots\cdot x_{n-2}$.

\begin{lem}\label{upperboundforxn-2}
If $n\geq 3$ and $\overline{X}_{n}\in S(n)$ then $x_{1}\cdot\ldots\cdot x_{n-2}\leq \binom{n}{2}$. In particular, $x_{n-2}\leq \binom{n}{2}$.
\end{lem}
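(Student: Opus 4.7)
The plan is to use the monotonicity $x_1 \le x_2 \le \ldots \le x_n$ to bound $\sigma_2(\overline{X}_n)$ from above by something very simple, and then exploit the equation $\sigma_2(\overline{X}_n)=\sigma_n(\overline{X}_n)$ to transfer that bound to the product $x_1\cdots x_{n-2}$.

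First, I would observe that every summand $x_ix_j$ with $i<j$ in
\[
\sigma_2(\overline{X}_n)=\sum_{1\le i<j\le n}x_ix_j
\]
is at most $x_{n-1}x_n$, simply because the sequence is non-decreasing. Since there are $\binom{n}{2}$ such summands, this yields the crude upper bound
\[
\sigma_2(\overline{X}_n)\le \binom{n}{2}x_{n-1}x_n.
\]

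Next I would use the defining equation $\sigma_2(\overline{X}_n)=\sigma_n(\overline{X}_n)=x_1x_2\cdots x_n$. Combining this with the inequality above gives
\[
x_1x_2\cdots x_n\le \binom{n}{2}x_{n-1}x_n,
\]
and since $x_{n-1}x_n>0$ we may divide it out to obtain $x_1\cdots x_{n-2}\le \binom{n}{2}$, which is the main claim. The ``in particular'' part is then immediate: because every $x_i$ is a positive integer, $x_{n-2}\le x_1x_2\cdots x_{n-2}\le \binom{n}{2}$.

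There is no real obstacle here; the entire argument is a one-line size estimate on $\sigma_2$ followed by a division. The only thing one has to be slightly careful about is using the ordering of the $x_i$'s to justify replacing each $x_ix_j$ by $x_{n-1}x_n$, and using $x_i\ge 1$ (positive integers) to pass from the product bound to the bound on $x_{n-2}$.
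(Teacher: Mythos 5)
Your proof is correct and follows essentially the same route as the paper: bound each of the $\binom{n}{2}$ terms of $\sigma_2(\overline{X}_n)$ by $x_{n-1}x_n$, use the equation to replace $\sigma_2$ by the full product, and divide by $x_{n-1}x_n$. You merely spell out the details (the term count and the final passage to $x_{n-2}$ via $x_i\geq 1$) that the paper leaves implicit.
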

\begin{proof}
If $\overline{X}_{n}\in S(n)$ with $x_{1}\leq\ldots\leq x_{n}$ then
$$
\sigma_{n}(\overline{X}_{n})=\sigma_{2}(\overline{X}_{n})\leq \binom{n}{2}x_{n-1}x_{n},
$$
and dividing by $x_{n-1}x_{n}$ we get the inequality from the statement.
\end{proof}

On the other hand, we have a lower bound $x_{1}\cdot\ldots\cdot x_{n-2}\geq 2$ as the following holds.

\begin{lem}\label{lowerboundforxn-2}
For $n\geq 3$ we have $S_{0}(n)=S_{1}(n)=S_{2}(n)=\emptyset$.
\end{lem}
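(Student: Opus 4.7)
The plan is to handle the three cases $S_0(n)$, $S_1(n)$, $S_2(n)$ separately by direct computation, using the identity (\ref{sigmaid2}) to split $\sigma_2$ into a part coming from the block of $1$'s and a part coming from the remaining variables. In each case the equation $\sigma_{2}(\overline{X}_{n})=\sigma_{n}(\overline{X}_{n})$ reduces to an identity whose two sides have opposite signs.

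For $S_{0}(n)$, a solution would satisfy $\overline{X}_{n}=\overline{1}_{n}$, so $\sigma_{2}(\overline{X}_{n})=\binom{n}{2}$ while $\sigma_{n}(\overline{X}_{n})=1$. Since $n\geq 3$ forces $\binom{n}{2}\geq 3$, equality is impossible. For $S_{1}(n)$, I would write $\overline{X}_{n}=(\overline{1}_{n-1},x_{n})$ with $x_{n}\geq 2$ and apply (\ref{sigmaid2}) to obtain
$$
\sigma_{2}(\overline{X}_{n})=(n-1)x_{n}+\binom{n-1}{2},\qquad \sigma_{n}(\overline{X}_{n})=x_{n}.
$$
Setting these equal and rearranging gives $(n-2)x_{n}=-\binom{n-1}{2}$, which is impossible for $n\geq 3$ since the left-hand side is nonnegative and the right-hand side is strictly negative.

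For $S_{2}(n)$, write $\overline{X}_{n}=(\overline{1}_{n-2},x_{n-1},x_{n})$ with $2\leq x_{n-1}\leq x_{n}$ and again apply (\ref{sigmaid2}) with $\overline{X}_{k_{1}}=\overline{1}_{n-2}$ and $\overline{Y}_{k_{2}}=(x_{n-1},x_{n})$. Using $\sigma_{1}(\overline{1}_{n-2})=n-2$, $\sigma_{2}(\overline{1}_{n-2})=\binom{n-2}{2}$ and $\sigma_{2}(x_{n-1},x_{n})=x_{n-1}x_{n}$, one gets
$$
\sigma_{2}(\overline{X}_{n})=(n-2)(x_{n-1}+x_{n})+\binom{n-2}{2}+x_{n-1}x_{n},
$$
while $\sigma_{n}(\overline{X}_{n})=x_{n-1}x_{n}$. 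The equation $\sigma_2=\sigma_n$ collapses to $(n-2)(x_{n-1}+x_{n})+\binom{n-2}{2}=0$, whose left-hand side is strictly positive for $n\geq 3$ and $x_{n-1},x_{n}\geq 2$; contradiction.

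There is no real obstacle here: the whole argument is a direct application of the two identities stated just before the lemma, and the only care needed is keeping track of the edge case $n=3$ (where $\binom{n-2}{2}=0$ but the linear term $(n-2)(x_{n-1}+x_{n})\geq 4$ is still strictly positive).
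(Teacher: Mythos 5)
Your proof is correct and rests on the same observation as the paper's: when at most two entries exceed $1$, $\sigma_{n}(\overline{X}_{n})=x_{n-1}x_{n}$ while $\sigma_{2}(\overline{X}_{n})$ contains $x_{n-1}x_{n}$ plus strictly positive extra terms. The paper merely compresses your three cases into the single inequality $\sigma_2(\overline{X}_n)\geq x_{n-1}x_n+x_{n-1}+x_n>x_{n-1}x_n=\sigma_n(\overline{X}_n)$, so the two arguments are essentially identical.
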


\begin{proof}
Suppose that $\overline{X}_n\in S_i(n)$ for some $i\leq 2$. Then
$$\sigma_2(\overline{X}_n)\geq x_{n-1}x_n+x_{n-1}+x_n> x_{n-1}x_n= \sigma_n(\overline{X}_n),$$
which is a contradiction.
\end{proof}

Lemma \ref{lowerboundforxn-2} states that if $n\in\N_{\geq 3}$ and $S_i(n)\neq\emptyset$, then $i\geq 3$. However, if $S_i(n)\neq\emptyset$, then $i$ cannot be too big comparing to $n$. This is due to the lemma below.

\begin{lem}\label{inumber}
Let $n\in\N_{\geq 3}$. If $S_i(n)\neq\emptyset$, then $i\leq 2+\log_2\binom{n}{2}$.
\end{lem}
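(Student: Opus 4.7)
The plan is to combine the two preceding lemmas. Suppose $\overline{X}_n \in S_i(n)$. By Lemma \ref{lowerboundforxn-2} we already know $i \geq 3$, so in particular $n-2 \geq n-i+1$, which means that the initial segment $x_1,\ldots,x_{n-2}$ covers all the 1's (positions $1,\ldots,n-i$) plus exactly the $i-2$ non-trivial terms $x_{n-i+1},\ldots,x_{n-2}$.

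Next I would use the defining condition $x_j \geq 2$ for $j \geq n-i+1$ to get a lower bound on the product $x_1\cdots x_{n-2}$. Since the first $n-i$ factors equal $1$ and the remaining $i-2$ factors are each at least $2$, we obtain
$$x_{1}\cdots x_{n-2} \;=\; x_{n-i+1}\cdots x_{n-2} \;\geq\; 2^{\,i-2}.$$
Now I would invoke Lemma \ref{upperboundforxn-2}, which gives the complementary upper bound $x_1\cdots x_{n-2} \leq \binom{n}{2}$. Combining,
$$2^{\,i-2} \leq \binom{n}{2},$$
and taking logarithms base $2$ yields $i \leq 2 + \log_{2}\binom{n}{2}$, as required.

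There is essentially no obstacle here: the argument is a one-line combination of the trivial lower bound from the $\geq 2$ condition with the already-established bound from Lemma \ref{upperboundforxn-2}. The only point that deserves a sentence in the write-up is the observation that $i\geq 3$ (from Lemma \ref{lowerboundforxn-2}) ensures $n-i+1 \leq n-2$, so that the product $x_1\cdots x_{n-2}$ actually contains all $i-2$ of the factors that are guaranteed to be at least $2$; without this remark the factorization of $x_1\cdots x_{n-2}$ into ones and non-ones would not be automatic.
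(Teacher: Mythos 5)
Your proof is correct and follows essentially the same route as the paper: the bound $2^{i-2}\leq x_{n-i+1}\cdots x_{n-2}\leq\binom{n}{2}$ from Lemma \ref{upperboundforxn-2}, followed by taking base-2 logarithms. Your extra remark that $i\geq 3$ guarantees the non-unit factors all sit inside $x_1\cdots x_{n-2}$ is a harmless (and slightly more careful) addition that the paper leaves implicit.
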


\begin{proof}
Let $\overline{X}_{n}\in S_i(n)$. Then
$$2^{i-2}\leq x_{n-i+1}\cdot\ldots\cdot x_{n-2}\leq\binom{n}{2},$$
where the last inequality follows from Lemma \ref{upperboundforxn-2}. After taking the logarithm with base $2$ from the left hand side and right hand side of the above inequality we get
$$i-2\leq\log_2\binom{n}{2}.$$
The lemma is proved.
\end{proof}

As an immediate consequence we get the following.
\begin{cor}\label{ones}
Let $n\in\N_{\geq 3}$ and $\overline{X}_{n}\in S(n)$. If $n\geq 6$ then $x_{1}=1$. If $n\geq 8$ then $x_{1}=x_{2}=1$.
\end{cor}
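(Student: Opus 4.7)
The plan is to apply Lemma \ref{inumber} directly, by contrapositive on the number of $1$'s among the $x_j$.

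Suppose $x_1 \geq 2$. Since the sequence is non-decreasing, all coordinates exceed $1$, so $\overline{X}_n \in S_n(n)$. By Lemma \ref{inumber} this forces $n \leq 2 + \log_2\binom{n}{2}$, equivalently $2^{n-2} \leq \binom{n}{2}$. I would dispatch this inequality by a direct check at $n=6$ (where $2^4 = 16 > 15 = \binom{6}{2}$) combined with the observation that the ratio $2^{n-2}/\binom{n}{2}$ is increasing in $n$ (an immediate induction, since doubling the numerator outpaces the factor $(n+1)/(n-1)$ on the denominator once $n \geq 3$). This yields $x_1 = 1$ whenever $n \geq 6$.

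For the second statement, assume $x_2 \geq 2$. Then at most one coordinate equals $1$, so $\overline{X}_n \in S_i(n)$ with $i \geq n-1$. Lemma \ref{inumber} gives $n-1 \leq 2 + \log_2\binom{n}{2}$, i.e., $2^{n-3} \leq \binom{n}{2}$. At $n=8$ this reads $32 \leq 28$, a contradiction, and the same monotonicity argument as above ensures the inequality continues to fail for every $n \geq 8$. Hence $x_2 = 1$, and together with the first part $x_1 = x_2 = 1$.

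The argument is essentially a routine numerical verification once Lemma \ref{inumber} is in hand, so I expect no real obstacle; the only thing to be mildly careful about is confirming that $2^{n-2} > \binom{n}{2}$ for all $n \geq 6$ (and similarly $2^{n-3} > \binom{n}{2}$ for all $n \geq 8$), which I would record as a short separate observation rather than verify case by case.
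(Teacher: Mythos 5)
Your proof is correct and is exactly the derivation the paper intends: the corollary is stated there as an immediate consequence of Lemma \ref{inumber}, obtained by noting that $x_1\geq 2$ (resp.\ $x_2\geq 2$) forces $i=n$ (resp.\ $i\geq n-1$) and that $2^{n-2}>\binom{n}{2}$ for $n\geq 6$ and $2^{n-3}>\binom{n}{2}$ for $n\geq 8$. The numerical checks at $n=6$ and $n=8$ and the monotonicity of $2^{n-2}/\binom{n}{2}$ (ratio of consecutive terms $2(n-1)/(n+1)\geq 1$ for $n\geq 3$) are all as you state.
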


Writing $\overline{X}_{n-2}$ we mean $(x_1,\ldots,x_{n-2})$. The next result shows how to find $x_{n-1},x_n\in\N$ such that $\overline{X}_{n}\in S(n)$, where $\overline{X}_{n-2}$ is fixed.

\begin{thm}\label{sols}
Let $n\in\N_{\geq 3}$ and $\overline{X}_{n}\in S(n)$. Then
\begin{align*}
x_{n-1}&=\frac{\sigma_1(\overline{X}_{n-2})+d_{1}}{\sigma_{n-2}(\overline{X}_{n-2})-1},\quad x_n=\frac{\sigma_1(\overline{X}_{n-2})+d_{2}}{\sigma_{n-2}(\overline{X}_{n-2})-1},
\end{align*}
where $d_1,d_2\in\N$ are such that
$$d_{1}d_{2}=\sigma_1(\overline{X}_{n-2})^2+\sigma_2(\overline{X}_{n-2})(\sigma_{n-2}(\overline{X}_{n-2})-1)=:f(n,\overline{X}_{n-2}).$$
\end{thm}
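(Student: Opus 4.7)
The plan is to split the tuple as $\overline{X}_n=(\overline{X}_{n-2},x_{n-1},x_n)$ and use identities (\ref{sigmaid1}) and (\ref{sigmaid2}) to rewrite the defining equation $\sigma_2(\overline{X}_n)=\sigma_n(\overline{X}_n)$ as a single algebraic relation in the ``unknowns'' $x_{n-1}$ and $x_n$ whose coefficients depend only on the symmetric functions of the ``parameters'' $x_1,\ldots,x_{n-2}$. That relation will be linear in the sum $x_{n-1}+x_n$ and in the product $x_{n-1}x_n$, which is exactly the shape needed to pin down $x_{n-1},x_n$ via a Simon-style factoring trick.

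Concretely, set $A:=\sigma_1(\overline{X}_{n-2})$, $B:=\sigma_2(\overline{X}_{n-2})$ and $P:=\sigma_{n-2}(\overline{X}_{n-2})-1$. Identity (\ref{sigmaid2}) with $k_1=n-2$, $k_2=2$ yields
$$\sigma_2(\overline{X}_n)=A(x_{n-1}+x_n)+B+x_{n-1}x_n,$$
while $\sigma_n$ factors multiplicatively as $\sigma_n(\overline{X}_n)=(P+1)x_{n-1}x_n$. Equating these and rearranging gives $Px_{n-1}x_n-Ax_{n-1}-Ax_n=B$. Multiplying through by $P$ and adding $A^2$ to both sides produces the factorization
$$(Px_{n-1}-A)(Px_n-A)=A^2+BP=f(n,\overline{X}_{n-2}),$$
so defining $d_1:=Px_{n-1}-A$ and $d_2:=Px_n-A$ recovers the formulas in the statement after dividing by $P$.

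Two small points must be verified, neither of which I expect to be hard. Division by $P$ is legitimate because $P\neq 0$: if $\sigma_{n-2}(\overline{X}_{n-2})=1$ then $x_1=\cdots=x_{n-2}=1$, forcing $\overline{X}_n\in S_0(n)\cup S_1(n)\cup S_2(n)=\emptyset$ by Lemma \ref{lowerboundforxn-2}. Membership $d_1,d_2\in\N$ follows from $B\geq 0$ and $A,x_{n-1},x_n\geq 1$, since then $Px_{n-1}x_n=Ax_{n-1}+Ax_n+B>Ax_n$ yields $Px_{n-1}>A$ and symmetrically $Px_n>A$. The whole content of the theorem is thus the factoring identity; its payoff, which presumably drives later sections, will be that enumerating solutions with $\overline{X}_{n-2}$ fixed reduces to sweeping over ordered divisor pairs of the explicit integer $f(n,\overline{X}_{n-2})$.
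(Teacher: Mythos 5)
Your proposal is correct and takes essentially the same route as the paper: both rewrite $\sigma_2(\overline{X}_n)=\sigma_n(\overline{X}_n)$ via \eqref{sigmaid2} as a relation linear in $x_{n-1}+x_n$ and $x_{n-1}x_n$, and then apply the same multiply-by-$(\sigma_{n-2}(\overline{X}_{n-2})-1)$-and-factor trick to obtain $(Px_{n-1}-A)(Px_n-A)=f(n,\overline{X}_{n-2})$. Your explicit checks that $P\geq 1$ (via Lemma \ref{lowerboundforxn-2}) and that $d_1,d_2>0$ are slightly more detailed than the paper's, which simply invokes $\sigma_{n-2}(\overline{X}_{n-2})\geq 2$.
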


\begin{proof}
Let us observe that
\begin{align*}
&\ (\sigma_{n-2}(\overline{X}_{n-2})-1)(\sigma_{n}(\overline{X}_n)-\sigma_{2}(\overline{X}_n))\\
= &\ (\sigma_{n-2}(\overline{X}_{n-2})-1)\\
&\ \cdot\left[(\sigma_{n-2}(\overline{X}_{n-2})-1)x_{n-1}x_n-\sigma_{1}(\overline{X}_{n-2})(x_{n-1}+x_n)-\sigma_{2}(\overline{X}_{n-2})\right]\\
= &\ \left[(\sigma_{n-2}(\overline{X}_{n-2})-1)x_{n-1}-\sigma_{1}(\overline{X}_{n-2})\right]\left[(\sigma_{n-2}(\overline{X}_{n-2})-1)x_n-\sigma_{1}(\overline{X}_{n-2})\right]\\
&\ -f(n,\overline{X}_{n-2}).
\end{align*}
Thus, because $\sigma_{n-2}(\overline{X}_{n-2})\geq 2$ we see that $\overline{X}_n\in S(n)$ if and only if there are positive integers $d_{1}, d_{2}$ such that $d_{1}d_{2}=f(n,\overline{X}_{n-2})$ and the system of equations
$$
(\sigma_{n-2}(\overline{X}_{n-2})-1)x_{n-1}-\sigma_{1}(\overline{X}_{n-2})=d_{1},\quad (\sigma_{n-2}(\overline{X}_{n-2})-1)x_{n-1}-\sigma_{1}(\overline{X}_{n-2})=d_{2}.
$$
has a solution in integers $x_{n-1}, x_n$. Solving for $x_{n-1}, x_n$ we get the expressions from the statement.
\end{proof}

\begin{rem}
{\rm According to Lemma \ref{upperboundforxn-2} and Theorem \ref{sols}, it suffices to find all the possible values of $x_{n-1}$ and $x_n$ for all $\overline{X}_{n-2}$ such that $\sigma_{n-2}(\overline{X}_{n-2})\leq\binom{n}{2}$. Since there are only finitely many such $(n-2)$-tuples $\overline{X}_{n-2}$, the number of solutions of \eqref{maineq} is finite for each $n\in\N_{\geq 3}$.}
\end{rem}

Now we state the result that gives us an upper bound for unknown $x_n$, where $n\in\N_{\geq 3}$ is fixed.

\begin{thm}\label{mainbound}
Let $n\in\N_{\geq 3}$ and $m\in\N$ satisfies
$$
m=\sigma_{2}(\overline{X}_{n})=\sigma_{n}(\overline{X}_{n}).
$$
Then $m\leq n^2(3n-5)$ and $x_{n}\leq \frac{1}{2}n(3n-5)$.
\end{thm}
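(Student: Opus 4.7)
My approach rests on Theorem~\ref{sols} combined with an exchange argument on $\overline{X}_{n-2}$. For $\overline{X}_n \in S(n)$ set $P = x_1\cdots x_{n-2}$, $S_1 = \sigma_1(\overline{X}_{n-2})$, $S_2 = \sigma_2(\overline{X}_{n-2})$, and $f = S_1^2 + S_2(P-1)$; by Lemmas~\ref{lowerboundforxn-2} and~\ref{upperboundforxn-2} we have $2\leq P\leq \binom{n}{2}$. Theorem~\ref{sols} gives $x_n = (S_1+d_2)/(P-1)$ with $d_1 d_2 = f$ and $1\leq d_1\leq d_2$, so $d_2\leq f$ and, similarly, $(S_1+d_1)(S_1+d_2)\leq (S_1+1)(S_1+f)$ (both maxima attained at $d_1 = 1$, $d_2 = f$). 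This yields the parameter-dependent estimates
\[
x_n \leq \frac{S_1+f}{P-1} = \frac{S_1(S_1+1)}{P-1} + S_2 =: g(\overline{X}_{n-2}), \qquad m \leq \frac{P(S_1+1)(S_1+f)}{(P-1)^2} =: h(\overline{X}_{n-2}).
\]

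The next step is to maximize $g$ and $h$ over admissible $\overline{X}_{n-2}$. An exchange argument shows that, for fixed $P$, both quantities attain their maximum at $(\underbrace{1,\ldots,1}_{n-3}, P)$: replacing any pair $(a,b)$ with $a,b\geq 2$ by $(1,ab)$ keeps $P$ fixed while increasing $S_1$ by $(a-1)(b-1)$ and $S_2$ by $(a-1)(b-1)\sum_{k\neq a,b}x_k$, and a short calculation shows both $g$ and $h$ weakly increase. For the tuple $(1,\ldots,1,P)$ a direct calculation yields
\[
g((1,\ldots,1,P)) - \frac{n(3n-5)}{2} = \frac{(n-2)(P-2)(P-n)}{P-1},
\]
with an analogous, sign-identical factorization for $h((1,\ldots,1,P)) - n^2(3n-5)$. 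Both right-hand sides are $\leq 0$ precisely when $P \in [2,n]$, so for every $\overline{X}_{n-2}$ with $P\in\{2,\ldots,n\}$ we obtain $x_n\leq n(3n-5)/2$ and $m\leq n^2(3n-5)$; equality holds at $P=2$, realized by the extremal solution $(\underbrace{1,\ldots,1}_{n-3},2,n,\tfrac{n(3n-5)}{2})$ and also, for $h$, at $P = n$.

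The remaining and most delicate obstacle is $P\in\{n+1,\ldots,\binom{n}{2}\}$, where the naive bounds $g$ and $h$ genuinely exceed the target values. Here the integrality conditions from Theorem~\ref{sols} — $d_1\mid f$, $(P-1)\mid(S_1+d_1)$, together with the ordering constraint $x_{n-1}=(S_1+d_1)/(P-1)\geq x_{n-2}$ — must rule out the offending configurations. For the extremal-per-$P$ tuple $(1,\ldots,1,P)$ the inequality $x_{n-1}\geq P$ forces $d_1\geq P(P-1)-(n-3+P)$, which grows like $P^2$ and, for $P>n$, exceeds $\sqrt{f}$ (which grows like $P\sqrt{n}$), contradicting $d_1\leq d_2$; hence no solution exists in this case. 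Extending this exclusion to non-extremal tuples with $P>n$, by analyzing the divisor/residue structure of $f$ modulo $P-1$, is where the bulk of the technical work lies.
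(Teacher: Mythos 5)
Your Case~I ($P=\sigma_{n-2}(\overline{X}_{n-2})\in[2,n]$) is correct and essentially matches the paper: your exchange argument reaching the extremal tuple $(\overline{1}_{n-3},P)$ reproduces the paper's bounds $\sigma_1(\overline{X}_{n-2})\leq n-3+P$ and $\sigma_2(\overline{X}_{n-2})\leq\frac{1}{2}(n-3)(n-4)+(n-3)P$ (obtained there via Lemma~\ref{estimate}), and your factorization $g-\frac{1}{2}n(3n-5)=\frac{(n-2)(P-2)(P-n)}{P-1}$ checks out and is in fact cleaner than the paper's endpoint analysis. The genuine gap is Case~II, $P\in\{n+1,\ldots,\binom{n}{2}\}$, which you explicitly leave as ``the bulk of the technical work.'' Worse, the plan you sketch for it --- ruling out the offending configurations via the divisor/residue structure of $f$ modulo $P-1$ --- cannot succeed as stated, because solutions with $P>n$ genuinely exist: for instance $(\overline{1}_{8},2,2,4,4,4)\in S(13)$ has $P=16>13$. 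So this regime cannot be excluded; one must instead prove a better bound on $x_n$ there.

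The paper closes Case~II with a short argument that needs no divisibility analysis and no reduction to the extremal tuple: since $x_{n-1}\geq 2$, one has
$$d_1=(P-1)x_{n-1}-\sigma_1(\overline{X}_{n-2})\geq 2(P-1)-\sigma_1(\overline{X}_{n-2})\geq P-n+1\geq 2,$$
using $\sigma_1(\overline{X}_{n-2})\leq P+n-3$. Hence $d_2\leq f/(P-n+1)$, and substituting this (rather than $d_2\leq f$) into $x_n=\frac{\sigma_1(\overline{X}_{n-2})+d_2}{P-1}$ together with the bounds on $\sigma_1,\sigma_2$ yields $x_n\leq\frac{1}{4}(3n^2-3n-2)\leq\frac{1}{2}n(3n-5)$ and, after a similar computation, $m\leq\frac{1}{2}(n+1)(3n^2-3n-2)<n^2(3n-5)$. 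The key point you are missing is that the crude inequality $x_{n-1}\geq 2$ already forces $d_1$ to grow linearly with $P$ once $P>n$, which is exactly enough to compensate for the failure of the naive bound $d_2\leq f$; the full constraint $x_{n-1}\geq x_{n-2}$ and the arithmetic of the divisors of $f$ are not needed. As written, your proof establishes the theorem only for solutions with $x_1\cdots x_{n-2}\leq n$.
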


Before we prove Theorem \ref{mainbound} we need some preparation. First, if $\overline{X}_n\in S_i(n)$, then $i\geq 3$ by Lemma \ref{lowerboundforxn-2}. Thus, we may put
$$y_j=x_{n-i+j},\quad j\in\{1,\ldots,i-2\}$$
and
$$\overline{Y}_{i-2}=(y_1,\ldots,y_{i-2}).$$
Of course,
\begin{align}\label{4}
\overline{Y}_{i-2}\in\N_{\geq 2}^{i-2},\quad \overline{X}_{n-2}=(\overline{1}_{n-i},\overline{Y}_{i-2})
\end{align}
and
\begin{align}\label{5}
\sigma_{n-2}(\overline{X}_{n-2})=\sigma_{i-2}(\overline{Y}_{i-2}).
\end{align}
Let us estimate $\sigma_{1}(\overline{Y}_{i-2})$ and $\sigma_{2}(\overline{Y}_{i-2})$ from above in terms of $\sigma_{i-2}(\overline{Y}_{i-2})$.

\begin{lem}\label{estimate}
We have
$$\sigma_{1}(\overline{Y}_{i-2})\leq \frac{i-2}{2^{i-3}}\sigma_{i-2}(\overline{Y}_{i-2}), \quad \sigma_{2}(\overline{Y}_{i-2})\leq \frac{(i-2)(i-3)}{2^{i-3}}\sigma_{i-2}(\overline{Y}_{i-2}).$$
\end{lem}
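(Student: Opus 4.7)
The strategy is to divide both proposed inequalities by $\sigma_{i-2}(\overline{Y}_{i-2})=y_{1}\cdots y_{i-2}$, which turns each into a sum of reciprocals of sub-products of the $y_{j}$'s; the hypothesis $y_{j}\geq 2$ then controls each reciprocal by a fixed power of $1/2$, and counting the number of terms finishes the job.

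Concretely, for the first inequality I would start from the identity
$$\frac{\sigma_{1}(\overline{Y}_{i-2})}{y_{1}\cdots y_{i-2}}=\sum_{j=1}^{i-2}\frac{1}{\prod_{\ell\neq j}y_{\ell}}.$$
Each product on the right-hand side has $i-3$ factors, each at least $2$, hence is at least $2^{i-3}$. There are $i-2$ summands, so the sum is at most $(i-2)/2^{i-3}$; multiplying back by $y_{1}\cdots y_{i-2}=\sigma_{i-2}(\overline{Y}_{i-2})$ gives the first claim. For the second inequality the same division yields
$$\frac{\sigma_{2}(\overline{Y}_{i-2})}{y_{1}\cdots y_{i-2}}=\sum_{1\leq j<\ell\leq i-2}\frac{1}{\prod_{m\neq j,\ell}y_{m}},$$
and each of the $\binom{i-2}{2}$ summands is bounded by $1/2^{i-4}$, so the total is at most $\binom{i-2}{2}/2^{i-4}=(i-2)(i-3)/2^{i-3}$, as required.

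The only mild care needed is at the boundary $i=3$, where the $\sigma_{2}$ inequality becomes $0\leq 0$ (since $\sigma_{2}$ of a single variable is the empty sum, and the right-hand side has the factor $i-3=0$), so it holds trivially; the cases $i\geq 4$ are covered by the calculation above. I do not expect any real obstacle here — the whole proof is just a careful bookkeeping exercise once one recognises that dividing by $\sigma_{i-2}(\overline{Y}_{i-2})$ is the right normalisation.
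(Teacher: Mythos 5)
Your proposal is correct and is essentially the paper's own argument: the paper writes each $y_k$ as $\sigma_{i-2}(\overline{Y}_{i-2})/\prod_{j\neq k}y_j$ and each $y_ky_l$ as $\sigma_{i-2}(\overline{Y}_{i-2})/\prod_{j\neq k,l}y_j$, bounds the denominators below by $2^{i-3}$ and $2^{i-4}$ respectively, and sums over the $i-2$ (resp.\ $\binom{i-2}{2}$) terms --- which is exactly your normalisation-by-$\sigma_{i-2}$ computation in a slightly different order. Your remark on the trivial boundary case $i=3$ is a harmless extra.
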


\begin{proof}
Since $x_j\geq 2$ for each $j\in\{1,\ldots,i-2\}$, we have
$$y_k=\frac{\sigma_{i-2}(\overline{Y}_{i-2})}{\prod_{j\neq k}y_j}\geq\frac{\sigma_{i-2}(\overline{Y}_{i-2})}{2^{i-3}}$$
and
$$y_ky_l=\frac{\sigma_{i-2}(\overline{Y}_{i-2})}{\prod_{j\neq k,l}y_j}\leq\frac{\sigma_{i-2}(\overline{Y}_{i-2})}{2^{i-4}}$$
for $k,l\in\{1,\ldots,i-2\}$. As a result we get
$$\sigma_{1}(\overline{Y}_{i-2})=\sum_{k=1}^{i-2}y_k\leq (i-2)\frac{\sigma_{i-2}(\overline{Y}_{i-2})}{2^{i-3}}$$
and
$$\sigma_{2}(\overline{Y}_{i-2})=\sum_{1\leq k<l\leq i-2}y_ky_l\leq \binom{i-2}{2}\frac{\sigma_{i-2}(\overline{Y}_{i-2})}{2^{i-4}}.$$
The results follow.
\end{proof}

As an immediate consequence of Lemma \ref{estimate}, the formulae \eqref{sigmaid1}, \eqref{sigmaid2} and the conditions \eqref{4} and \eqref{5} we get the following.

\begin{cor}\label{estimate2}
If $n\in\N_{\geq 3}$ and $\overline{X}_n\in S_i(n)$, then
\begin{align}\label{6}
\sigma_{1}(\overline{X}_{n-2})\leq n-i+\frac{i-2}{2^{i-3}}\sigma_{n-2}(\overline{X}_{n-2})
\end{align}
and
\begin{align}\label{7}
\sigma_{2}(\overline{X}_{n-2})\leq \frac{1}{2}(n-i)(n-i-1)+\frac{(i-2)(n-3)}{2^{i-3}}\sigma_{n-2}(\overline{X}_{n-2}).
\end{align}
In particular, for each $\overline{X}_n\in S(n)$ we have
\begin{align}\label{sigma1bound}
\sigma_{1}(\overline{X}_{n-2})\leq n-3+\sigma_{n-2}(\overline{X}_{n-2})
\end{align}
and
\begin{align}\label{sigma2bound}
\sigma_{2}(\overline{X}_{n-2})\leq \frac{1}{2}(n-3)(n-4)+(n-3)\sigma_{n-2}(\overline{X}_{n-2}).
\end{align}
\end{cor}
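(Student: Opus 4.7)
The plan is to decompose $\overline{X}_{n-2}$ using \eqref{4}, split the symmetric polynomials via the identities \eqref{sigmaid1} and \eqref{sigmaid2}, and then plug in the bounds from Lemma \ref{estimate} (together with \eqref{5}). First I would write $\overline{X}_{n-2}=(\overline{1}_{n-i},\overline{Y}_{i-2})$ and note the trivial evaluations $\sigma_{1}(\overline{1}_{n-i})=n-i$ and $\sigma_{2}(\overline{1}_{n-i})=\binom{n-i}{2}=\tfrac{1}{2}(n-i)(n-i-1)$. Applying \eqref{sigmaid1} then yields
\[
\sigma_{1}(\overline{X}_{n-2}) = (n-i)+\sigma_{1}(\overline{Y}_{i-2}),
\]
and inserting the first estimate of Lemma \ref{estimate} together with \eqref{5} gives \eqref{6} immediately.

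For the bound on $\sigma_{2}$, I would apply \eqref{sigmaid2} to write
\[
\sigma_{2}(\overline{X}_{n-2}) = (n-i)\,\sigma_{1}(\overline{Y}_{i-2}) + \tbinom{n-i}{2} + \sigma_{2}(\overline{Y}_{i-2}),
\]
and use both parts of Lemma \ref{estimate} to bound the first and third terms by $\frac{(n-i)(i-2)}{2^{i-3}}\sigma_{n-2}(\overline{X}_{n-2})$ and $\frac{(i-2)(i-3)}{2^{i-3}}\sigma_{n-2}(\overline{X}_{n-2})$ respectively. The key cancellation is combining the coefficients $(n-i)(i-2)+(i-2)(i-3)=(i-2)(n-3)$, which produces the right-hand side of \eqref{7}. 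This algebraic consolidation is the only even mildly tricky step, but it is routine.

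For the ``in particular'' specializations, I would invoke Lemma \ref{lowerboundforxn-2} to ensure $i\geq 3$ and then check that the function $\varphi(i):=\frac{i-2}{2^{i-3}}$ satisfies $\varphi(i)\leq 1$ for all $i\geq 3$ (with equality exactly at $i\in\{3,4\}$), while also $n-i\leq n-3$ and $(n-i)(n-i-1)\leq (n-3)(n-4)$. Substituting these coarser bounds into \eqref{6} and \eqref{7} yields \eqref{sigma1bound} and \eqref{sigma2bound}. I expect no real obstacle: the hardest part is just keeping the arithmetic with the factor $\frac{1}{2^{i-3}}$ clean when combining the two contributions in the $\sigma_{2}$ estimate.
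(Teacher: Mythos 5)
Your proof is correct and follows exactly the route the paper intends: the paper states \eqref{6} and \eqref{7} as immediate consequences of \eqref{sigmaid1}, \eqref{sigmaid2}, \eqref{4}, \eqref{5} and Lemma \ref{estimate}, and derives \eqref{sigma1bound}, \eqref{sigma2bound} by noting the right-hand sides are maximized at $i=3$, which is precisely your monotonicity check on $\frac{i-2}{2^{i-3}}$ and $n-i$. The coefficient consolidation $(n-i)(i-2)+(i-2)(i-3)=(i-2)(n-3)$ is the same computation left implicit in the paper.
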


\begin{proof}
The inequalities \eqref{sigma1bound} and \eqref{sigma2bound} follow as the expressions on the right hand side in \eqref{6} and \eqref{7} are clearly maximized for $i=3$.
\end{proof}

At this moment we are ready to give the proof of Theorem \ref{mainbound}.

\begin{proof}[Proof of Theorem \ref{mainbound}]
From Theorem \ref{sols} we know that $x_{n-1}=\frac{\sigma_1(\overline{X}_{n-2})+d_{1}}{\sigma_{n-2}(\overline{X}_{n-2})-1}$ and $x_n=\frac{\sigma_1(\overline{X}_{n-2})+d_{2}}{\sigma_{n-2}(\overline{X}_{n-2})-1}$, where $d_1d_2=f(n,\overline{X}_{n-2})$. Hence,
\begin{align*}
m &\ =\sigma_{n-2}(\overline{X}_{n-2})x_{n-1}x_n\\
&\ =\sigma_{n-2}(\overline{X}_{n-2})\frac{f(n,\overline{X}_{n-2})+(d_1+d_2)\sigma_1(\overline{X}_{n-2})+\sigma_1(\overline{X}_{n-2})^2}{(\sigma_{n-2}(\overline{X}_{n-2})-1)^2}.
\end{align*}
We consider two cases.

\bigskip

\textbf{Case I:} $\sigma_{n-2}(\overline{X}_{n-2})\leq n$. Then
\begin{align*}
x_n\leq \frac{\sigma_1(\overline{X}_{n-2})+f(n,\overline{X}_{n-2})}{\sigma_{n-2}(\overline{X}_{n-2})-1}=\frac{\sigma_1(\overline{X}_{n-2})(\sigma_1(\overline{X}_{n-2})+1)}{\sigma_{n-2}(\overline{X}_{n-2})-1}+\sigma_2(\overline{X}_{n-2}).
\end{align*}
Using inequalities \eqref{sigma1bound} and \eqref{sigma2bound} we obtain
\begin{align*}
x_n\leq &\ \frac{(\sigma_{n-2}(\overline{X}_{n-2})+n-3)(\sigma_{n-2}(\overline{X}_{n-2})+n-2)}{\sigma_{n-2}(\overline{X}_{n-2})-1}\\
&\ +\frac{1}{2}(n-3)(n-4)+(n-3)\sigma_{n-2}(\overline{X}_{n-2})\\
= &\ \sigma_{n-2}(\overline{X}_{n-2})+2n-4+\frac{(n-2)(n-1)}{\sigma_{n-2}(\overline{X}_{n-2})-1}\\
&\ +\frac{1}{2}(n-3)(n-4)+(n-3)\sigma_{n-2}(\overline{X}_{n-2})\\
= &\ (n-2)(\sigma_{n-2}(\overline{X}_{n-2})+2)+\frac{(n-2)(n-1)}{\sigma_{n-2}(\overline{X}_{n-2})-1}+\frac{1}{2}(n-3)(n-4).
\end{align*}
A simple analysis of the last expression treated as a function of $\sigma_{n-2}(\overline{X}_{n-2})\in [2,n]$ shows that this expression is maximized for $\sigma_{n-2}(\overline{X}_{n-2})\in\{2,n\}$ and attains the value
$$(n-2)(n+3)+\frac{1}{2}(n-3)(n-4)=\frac{1}{2}n(3n-5).$$

Now we estimate the value of $m$. Since $d_1+d_2\leq 1+f(n,\overline{X}_{n-2})$, we have
\begin{align*}
m\leq &\ \sigma_{n-2}(\overline{X}_{n-2})\cdot\frac{f(n,\overline{X}_{n-2})+(1+f(n,\overline{X}_{n-2}))\sigma_1(\overline{X}_{n-2})+\sigma_1(\overline{X}_{n-2})^2}{(\sigma_{n-2}(\overline{X}_{n-2})-1)^2}\\
= &\ \sigma_{n-2}(\overline{X}_{n-2})\cdot\frac{\sigma_1(\overline{X}_{n-2})+1}{\sigma_{n-2}(\overline{X}_{n-2})-1}\cdot\frac{\sigma_1(\overline{X}_{n-2})+f(n,\overline{X}_{n-2})}{\sigma_{n-2}(\overline{X}_{n-2})-1}
\end{align*}
From the estimation of $x_n$ we know that $\frac{\sigma_1(\overline{X}_{n-2})+f(n,\overline{X}_{n-2})}{\sigma_{n-2}(\overline{X}_{n-2})-1}\leq\frac{1}{2}n(3n-5)$, so it remains to bound $\sigma_{n-2}(\overline{X}_{n-2})\cdot\frac{\sigma_1(\overline{X}_{n-2})+1}{\sigma_{n-2}(\overline{X}_{n-2})-1}$ from above. By \eqref{sigma1bound} we have
\begin{align*}
\sigma_{n-2}(\overline{X}_{n-2})\cdot\frac{\sigma_1(\overline{X}_{n-2})+1}{\sigma_{n-2}(\overline{X}_{n-2})-1}\leq \sigma_{n-2}(\overline{X}_{n-2})\cdot\frac{\sigma_{n-2}(\overline{X}_{n-2})+n-2}{\sigma_{n-2}(\overline{X}_{n-2})-1}.
\end{align*}
An analysis of the derivative of the expression on the right hand side as a function of $\sigma_{n-2}(\overline{X}_{n-2})$ shows that this expression is maximized for $\sigma_{n-2}(\overline{X}_{n-2})\in\{2,n\}$ and attains the value $2n$. Summing up,
$$m\leq n^2(3n-5).$$

\bigskip

\textbf{Case II:} $\sigma_{n-2}(\overline{X}_{n-2})\geq n+1$. Since $x_{n-1}\geq 2$, we have
\begin{align*}
d_1= &\ (\sigma_{n-2}(\overline{X}_{n-2})-1)x_{n-1}-\sigma_{1}(\overline{X}_{n-2})\geq 2(\sigma_{n-2}(\overline{X}_{n-2})-1)-\sigma_{1}(\overline{X}_{n-2})\\
\geq &\ \sigma_{n-2}(\overline{X}_{n-2})-n+1\geq 2,
\end{align*}
where in the first inequality on the last line we used \eqref{sigma1bound}. Hence,
\begin{align*}
d_2=\frac{f(n,\overline{X}_{n-2})}{d_1}\leq\frac{\sigma_1(\overline{X}_{n-2})^2+\sigma_2(\overline{X}_{n-2})(\sigma_{n-2}(\overline{X}_{n-2})-1)}{\sigma_{n-2}(\overline{X}_{n-2})-n+1}.
\end{align*}
Consequently,
\begin{align*}
x_n\leq &\ \frac{\sigma_1(\overline{X}_{n-2})(\sigma_{n-2}(\overline{X}_{n-2})-n+1)+\sigma_1(\overline{X}_{n-2})^2+\sigma_2(\overline{X}_{n-2})(\sigma_{n-2}(\overline{X}_{n-2})-1)}{(\sigma_{n-2}(\overline{X}_{n-2})-1)(\sigma_{n-2}(\overline{X}_{n-2})-n+1)}\\
= &\ \frac{\sigma_1(\overline{X}_{n-2})(\sigma_{n-2}(\overline{X}_{n-2})-n+1+\sigma_1(\overline{X}_{n-2}))+\sigma_2(\overline{X}_{n-2})(\sigma_{n-2}(\overline{X}_{n-2})-1)}{(\sigma_{n-2}(\overline{X}_{n-2})-1)(\sigma_{n-2}(\overline{X}_{n-2})-n+1)}\\
\leq &\ \frac{\sigma_1(\overline{X}_{n-2})(2\sigma_{n-2}(\overline{X}_{n-2})-2)+\sigma_2(\overline{X}_{n-2})(\sigma_{n-2}(\overline{X}_{n-2})-1)}{(\sigma_{n-2}(\overline{X}_{n-2})-1)(\sigma_{n-2}(\overline{X}_{n-2})-n+1)}\\
\leq &\ \frac{2\sigma_1(\overline{X}_{n-2})+\sigma_2(\overline{X}_{n-2})}{\sigma_{n-2}(\overline{X}_{n-2})-n+1}\\
\leq &\ \frac{2(n-3)+2\sigma_{n-2}(\overline{X}_{n-2})+\frac{1}{2}(n-3)(n-4)+(n-3)\sigma_{n-2}(\overline{X}_{n-2})}{\sigma_{n-2}(\overline{X}_{n-2})-n+1}\\
= &\ \frac{\frac{1}{2}(n-3)n+(n-1)\sigma_{n-2}(\overline{X}_{n-2})}{\sigma_{n-2}(\overline{X}_{n-2})-n+1}= \frac{\frac{1}{2}(n-3)n+(n-1)^2}{\sigma_{n-2}(\overline{X}_{n-2})-n+1}+n-1\\
\leq &\ \frac{\frac{1}{2}(n-3)n+(n-1)^2}{2}+n-1=\frac{1}{4}(3n^2-3n-2),
\end{align*}
where in the third and fifth line we used \eqref{sigma1bound} and \eqref{sigma2bound}.

Now we estimate the value of $m$. Since $d_1\leq \sigma_{n-2}(\overline{X}_{n-2})-n+1$, the value of $d_1+d_2$ is maximized for $d_1= \sigma_{n-2}(\overline{X}_{n-2})-n+1=:d_1(\overline{X}_{n-2})$ and $d_2=\frac{f(n,\overline{X}_{n-2})}{\sigma_{n-2}(\overline{X}_{n-2})-n+1}=:d_2(\overline{X}_{n-2})$. Hence, we have
\begin{align*}
m\leq &\ \sigma_{n-2}(\overline{X}_{n-2})\cdot\frac{f(n,\overline{X}_{n-2})+(d_1(\overline{X}_{n-2})+d_2(\overline{X}_{n-2}))\sigma_1(\overline{X}_{n-2})+\sigma_1(\overline{X}_{n-2})^2}{(\sigma_{n-2}(\overline{X}_{n-2})-1)^2}\\
= &\ \sigma_{n-2}(\overline{X}_{n-2})\cdot\frac{\sigma_1(\overline{X}_{n-2})+d_1(\overline{X}_{n-2})}{\sigma_{n-2}(\overline{X}_{n-2})-1}\cdot\frac{\sigma_1(\overline{X}_{n-2})+d_2(\overline{X}_{n-2})}{\sigma_{n-2}(\overline{X}_{n-2})-1}.
\end{align*}
From the estimation of $x_n$ we know that $$\frac{\sigma_1(\overline{X}_{n-2})+d_2(\overline{X}_{n-2})}{\sigma_{n-2}(\overline{X}_{n-2})-1}\leq\frac{\frac{1}{2}(n-3)n+(n-1)^2}{\sigma_{n-2}(\overline{X}_{n-2})-n+1}+n-1,$$ so it remains to bound $\sigma_{n-2}(\overline{X}_{n-2})\cdot\frac{\sigma_1(\overline{X}_{n-2})+d_1(\overline{X}_{n-2})}{\sigma_{n-2}(\overline{X}_{n-2})-1}$ from above. By \eqref{sigma1bound} we have
\begin{align*}
&\ \sigma_{n-2}(\overline{X}_{n-2})\cdot\frac{\sigma_1(\overline{X}_{n-2})+d_1(\overline{X}_{n-2})}{\sigma_{n-2}(\overline{X}_{n-2})-1}\leq \sigma_{n-2}(\overline{X}_{n-2})\cdot\frac{2\sigma_{n-2}(\overline{X}_{n-2})-2}{\sigma_{n-2}(\overline{X}_{n-2})-1}\\
= &\ 2\sigma_{n-2}(\overline{X}_{n-2}).
\end{align*}
Hence,
$$m\leq 2\sigma_{n-2}(\overline{X}_{n-2})\left(\frac{\frac{1}{2}(n-3)n+(n-1)^2}{\sigma_{n-2}(\overline{X}_{n-2})-n+1}+n-1\right).$$
An analysis of the derivative of the expression on the right hand side as a function of $\sigma_{n-2}(\overline{X}_{n-2})\in \left[n+1,\binom{n}{2}\right]$ shows that this expression is maximized for $\sigma_{n-2}(\overline{X}_{n-2})=n+1$ and attains the value $\frac{1}{2}(n+1)(3n^2-3n-2)$.

\bigskip

Since $n\geq 3$, we have $\frac{1}{4}(3n^2-3n-2)\leq\frac{1}{2}n(3n-5)$ and $\frac{1}{2}(n+1)(3n^2-3n-2)<n^2(3n-5)$. Thus $x_n\leq \frac{1}{2}n(3n-5)$ and $m\leq n^2(3n-5)$ in any case.
\end{proof}

\begin{rem}
{\rm From the proofs of Corollary \ref{estimate2} and Theorem \ref{mainbound} we see that $x_n=\frac{1}{2}n(3n-5)$ and $m=n^2(3n-5)$ if and only if $i=3$ and $\{\sigma_{n-2}(\overline{X}_{n-2}),x_{n-1}\}=\{2,n\}$, i.e. $\overline{X}_n=\left(\overline{1}_{n-3},2,n,\frac{1}{2}n(3n-5)\right)$.}
\end{rem}

We are also able to estimate $x_{n-2}$.

\begin{lem}\label{C}
Let $n\in\N_{\geq 3}$ and $\overline{X}_{n}\in S(n)$. Assume that $x_{n-2}\geq 1+C(n-2)^{2/3}$ for some real number $C>0$. Then
\begin{align}\label{ineqC}
    C\leq\frac{1}{C}(n-2)^{-1/3}+\sqrt{\frac{3}{2C}(n-2)^{-1}+\frac{1}{C^2}(n-2)^{-2/3}+(n-2)^{-1/3}+\frac{1}{2C}}.
\end{align}
\end{lem}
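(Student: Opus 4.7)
The plan is to combine Theorem~\ref{sols} with the ordering $x_{n-2}\leq x_{n-1}\leq x_n$ and the upper bounds \eqref{sigma1bound}, \eqref{sigma2bound} of Corollary~\ref{estimate2} to produce a cubic inequality in $x_{n-2}$, into which the hypothesis $x_{n-2}\geq 1+C(n-2)^{2/3}$ will be substituted. Put $M:=n-2$ and abbreviate $s_k:=\sigma_k(\overline{X}_{n-2})$. I would first dispose of the trivial range by observing that if $C\leq M^{-1/6}$ then $\frac{1}{C}M^{-1/3}\geq C$, so the right-hand side of \eqref{ineqC}---which is $\frac{1}{C}M^{-1/3}$ plus a non-negative square root---already dominates $C$. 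Hence I may assume $C>M^{-1/6}$, equivalently $x_{n-2}-1\geq CM^{2/3}>\sqrt{M}$.

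Next, with $d_1\leq d_2$ as in Theorem~\ref{sols} and $D:=(s_{n-2}-1)x_{n-2}-s_1$, I would use $s_{n-2}\geq x_{n-2}$ together with \eqref{sigma1bound} to check that $D\geq (x_{n-2}-1)^2-M>0$ in this range. The ordering $x_{n-1}\geq x_{n-2}$ forces $d_1\geq D\geq 0$, and combining this with $d_1\leq\sqrt{d_1d_2}=\sqrt{f(n,\overline{X}_{n-2})}$ allows squaring to get $D^2\leq s_1^2+s_2(s_{n-2}-1)$. Cancelling $s_1^2$ and dividing by $s_{n-2}-1>0$ yields the quadratic constraint
\[
x_{n-2}^2(s_{n-2}-1)\leq s_2+2x_{n-2}s_1.
\]
Feeding in \eqref{sigma1bound} and \eqref{sigma2bound}, regrouping so that $s_{n-2}$ is isolated, and invoking $s_{n-2}\geq x_{n-2}$ once more (legitimate since the coefficient $x_{n-2}^2-2x_{n-2}-(n-3)=(x_{n-2}-1)^2-M$ is positive in our range) should condense to
\[
x_{n-2}^3\leq 3x_{n-2}^2+3(n-3)x_{n-2}+\frac{(n-3)(n-4)}{2}.
\]

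To finish I would substitute $y:=x_{n-2}-1$, at which point the cubic simplifies to $y^3\leq 3My+\frac{M(M+3)}{2}$. Since $y\geq CM^{2/3}>\sqrt{M}$, both $y$ and $CM^{2/3}$ lie in the region where $y\mapsto y^3-3My$ is increasing, so replacing $y$ by its lower bound gives $C^3M^2-3CM^{5/3}\leq\frac{1}{2}M^2+\frac{3}{2}M$, i.e.\ $C^3\leq\frac{1}{2}+3CM^{-1/3}+\frac{3}{2}M^{-1}$. Dividing by $C$ and completing the square via the identity $\bigl(C-\frac{1}{C}M^{-1/3}\bigr)^2=C^2-2M^{-1/3}+\frac{1}{C^2}M^{-2/3}$ rearranges the inequality into the squared form of \eqref{ineqC}; extraction of square roots is then legitimate because $C>M^{-1/6}$ makes $C-\frac{1}{C}M^{-1/3}$ positive, and this produces \eqref{ineqC} exactly.

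The main obstacle I expect is the coordinated control of the three positivity conditions $D>0$, $(x_{n-2}-1)^2>M$, and $C>\frac{1}{C}M^{-1/3}$, all of which hinge on the single inequality $CM^{2/3}>\sqrt{M}$. This is precisely what forces the preliminary case split at $C=M^{-1/6}$ and keeps both the squaring and the square-root extraction oriented in the correct direction.
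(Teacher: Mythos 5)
Your proposal is correct and follows essentially the same route as the paper: both start from $x_{n-2}\le x_{n-1}=\frac{\sigma_1(\overline{X}_{n-2})+d_1}{\sigma_{n-2}(\overline{X}_{n-2})-1}$ with $d_1\le\sqrt{f(n,\overline{X}_{n-2})}$, feed in \eqref{sigma1bound}, \eqref{sigma2bound} and $\sigma_{n-2}(\overline{X}_{n-2})\ge x_{n-2}\ge 1+C(n-2)^{2/3}$, and rearrange. The only real difference is organizational: the paper keeps everything as a one-directional chain of inequalities ending in an expression of the form $1+A+\sqrt{B}$ and never squares, so it needs no case split at $C=(n-2)^{-1/6}$, whereas you square to a cubic in $x_{n-2}-1$ and then complete the square back---both land on \eqref{ineqC} exactly.
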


\begin{proof}
By Theorem \eqref{sols} and the fact that $x_n\geq x_{n-1}\geq x_{n-2}\geq 1+C(n-2)^{2/3}$ we obtain the following chain of inequalities:
\begin{align*}
    &\ 1+C(n-2)^{2/3}\leq x_{n-2}\leq x_{n-1}\leq \frac{\sigma_1(\overline{X}_{n-2})+\sqrt{f(n,\overline{X}_{n-2})}}{\sigma_{n-2}(\overline{X}_{n-2})-1}\\
    &\ =\frac{\sigma_1(\overline{X}_{n-2})+\sqrt{\sigma_1(\overline{X}_{n-2})^2+\sigma_2(\overline{X}_{n-2})(\sigma_{n-2}(\overline{X}_{n-2})-1)}}{\sigma_{n-2}(\overline{X}_{n-2})-1}\\
    &\ \leq\frac{\sigma_{n-2}(\overline{X}_{n-2})-1+n-2}{\sigma_{n-2}(\overline{X}_{n-2})-1}\\
    &\ \quad +\frac{\sqrt{(\sigma_{n-2}(\overline{X}_{n-2})-1+n-2)^2+[\binom{n-2}{2}+(n-3)(\sigma_{n-2}(\overline{X}_{n-2})-1)](\sigma_{n-2}(\overline{X}_{n-2})-1)}}{\sigma_{n-2}(\overline{X}_{n-2})-1}\\
    &\ =1+\frac{n-2}{\sigma_{n-2}(\overline{X}_{n-2})-1}\\
    &\ \quad +\sqrt{\left(1+\frac{n-2}{\sigma_{n-2}(\overline{X}_{n-2})-1}\right)^2+\frac{n-3}{2}\frac{n-2}{\sigma_{n-2}(\overline{X}_{n-2})-1}+n-3}\\
    &\ \leq 1+\frac{n-2}{x_{n-2}-1}+\sqrt{\left(1+\frac{n-2}{x_{n-2}-1}\right)^2+\frac{n-3}{2}\frac{n-2}{x_{n-2}-1}+n-3}\\
    &\ \leq 1+\frac{n-2}{C(n-2)^{2/3}}+\sqrt{\left(1+\frac{n-2}{C(n-2)^{2/3}}\right)^2+\frac{n-3}{2}\frac{n-2}{C(n-2)^{2/3}}+n-3}\\
    &\ \leq 1+\frac{1}{C}(n-2)^{1/3}\\
    &\ \quad +\sqrt{1+\frac{2}{C}(n-2)^{1/3}+\frac{1}{C^2}(n-2)^{2/3}+\frac{1}{2C}(n-2)^{4/3}-\frac{1}{2C}(n-2)^{1/3}+n-3}\\
    &\ =1+\frac{1}{C}(n-2)^{1/3}+\sqrt{\frac{3}{2C}(n-2)^{1/3}+\frac{1}{C^2}(n-2)^{2/3}+n-2+\frac{1}{2C}(n-2)^{4/3}}.
\end{align*}
After reduction the 1's and dividing by $(n-2)^{2/3}$ we get the the statement.
\end{proof}

\begin{cor}\label{xn2ineq}
Let $n\in\N_{\geq 3}$ and $\overline{X}_{n}\in S(n)$. Then we have
\begin{align}\label{ineqxS3}
    x_{n-2}\leq 1+\lfloor 2(n-2)^{2/3}\rfloor ,
\end{align}
where the equality holds only if $n=3$. Moreover, for each $C>2^{-1/3}$ there are only finitely many values of $n\in\N_{\geq 3}$ such that there exists $\overline{X}_{n}\in S(n)$ with $x_{n-2}>\lceil C(n-2)^{2/3}\rceil$.
\end{cor}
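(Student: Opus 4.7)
The strategy for both statements is to specialize Lemma \ref{C} to an appropriate value of $C$ and then analyze when the necessary condition \eqref{ineqC} can actually hold.

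For the universal bound, my plan is to apply Lemma \ref{C} with $C = 2$. Under the hypothesis $x_{n-2}\geq 1+2(n-2)^{2/3}$, inequality \eqref{ineqC} becomes
\[
2\leq\frac{1}{2}(n-2)^{-1/3}+\sqrt{\frac{3}{4}(n-2)^{-1}+\frac{1}{4}(n-2)^{-2/3}+(n-2)^{-1/3}+\frac{1}{4}}.
\]
A direct substitution at $n=3$, i.e.\ $n-2=1$, gives the right-hand side exactly $\frac{1}{2}+\sqrt{9/4}=2$, so the hypothesis is only marginally consistent there. Each of the four ingredients $\frac{1}{2}(n-2)^{-1/3}$, $\frac{3}{4}(n-2)^{-1}$, $\frac{1}{4}(n-2)^{-2/3}$, $(n-2)^{-1/3}$ is strictly decreasing in $n-2$, so for every $n\geq 4$ the right-hand side drops strictly below $2$, producing a contradiction. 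Hence $x_{n-2}<1+2(n-2)^{2/3}$ for $n\geq 4$, and integrality of $x_{n-2}$ promotes this to $x_{n-2}\leq 1+\lfloor 2(n-2)^{2/3}\rfloor$. The remaining case $n=3$ reduces to $x_1\leq 3$, which is already supplied by Lemma \ref{upperboundforxn-2}; the equality instance at $n=3$ is realized by $(3,3,3)\in S(3)$, whose common value of $\sigma_2$ and $\sigma_3$ is $27$.

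For the asymptotic statement I again apply Lemma \ref{C}, but with the given $C>2^{-1/3}$. If $x_{n-2}>\lceil C(n-2)^{2/3}\rceil$ then integrality together with $\lceil y\rceil\geq y$ gives $x_{n-2}\geq 1+\lceil C(n-2)^{2/3}\rceil\geq 1+C(n-2)^{2/3}$, so \eqref{ineqC} applies. As $n\to\infty$ each of the negative-power-of-$(n-2)$ terms on its right-hand side vanishes and the whole expression tends to $\sqrt{1/(2C)}$. The hypothesis $C>2^{-1/3}$ is equivalent to $C^3>1/2$, and hence to $C>\sqrt{1/(2C)}$; consequently the right-hand side of \eqref{ineqC} is eventually strictly below $C$, contradicting the lemma. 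Only finitely many $n$ can therefore admit a solution with $x_{n-2}>\lceil C(n-2)^{2/3}\rceil$.

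The delicate step is the \emph{equality only if $n=3$} clause. The argument above yields only the strict real inequality $x_{n-2}<1+2(n-2)^{2/3}$ for $n\geq 4$; this rules out $x_{n-2}=1+\lfloor 2(n-2)^{2/3}\rfloor$ precisely when $2(n-2)^{2/3}$ is an integer (e.g.\ $n-2=8k^3$), but in the generic case leaves a gap of size $1$. To close it cleanly one presumably returns to the cubic inequality $(x_{n-2}-1)^3\leq 3(n-2)(x_{n-2}-1)+\frac{1}{2}(n-2)(n+1)$ that underlies Lemma \ref{C} and verifies that its largest real root lies strictly below $\lfloor 2(n-2)^{2/3}\rfloor$ for every $n\geq 4$; an asymptotic expansion of that root gives roughly $2(n-2)^{2/3}-\frac{4}{5}(n-2)^{1/3}$, which comfortably undercuts $\lfloor 2(n-2)^{2/3}\rfloor$ once $n\geq 4$, but turning this into a clean finite-$n$ verification is where the work sits.
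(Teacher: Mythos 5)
Your derivation of the bound \eqref{ineqxS3} and of the ``moreover'' clause is correct and is essentially the paper's own argument: both specialize Lemma \ref{C} and exploit that the right-hand side of \eqref{ineqC} is decreasing in $n$ and in $C$, with equality exactly at $(C,n)=(2,3)$. Whether one freezes $C=2$ and lets $n$ grow (as you do) or freezes $n=3$ and lets $C$ vary (as the paper does, via \eqref{ineqC2}) is immaterial, and your limiting argument for $C>2^{-1/3}$, giving $C\leq (2C)^{-1/2}$ and hence $C\leq 2^{-1/3}$, is verbatim the paper's.

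The step you flag as delicate --- upgrading the strict real inequality $x_{n-2}<1+2(n-2)^{2/3}$ for $n\geq 4$ to the exclusion of $x_{n-2}=1+\lfloor 2(n-2)^{2/3}\rfloor$ --- is indeed not closed by your argument, but you should know that the paper's proof leaves exactly the same hole. The paper dismisses equality in \eqref{ineqxS3} for $n>3$ by observing that $C=2$ violates \eqref{ineqC}; this only rules out $x_{n-2}=1+2(n-2)^{2/3}$, and since the right-hand side of \eqref{ineqC} decreases in $C$, the failure of $C=2$ says nothing about the value $C=\lfloor 2(n-2)^{2/3}\rfloor(n-2)^{-2/3}$, which is strictly less than $2$ whenever $2(n-2)^{2/3}\notin\Z$. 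So your honesty here has located a genuine imprecision in the source rather than a defect unique to your write-up; closing it would require a quantitative strengthening of Lemma \ref{C} of the kind you sketch (an explicit gain of order $(n-2)^{1/3}$ in the upper bound), or a finite check for the small $n$ where that gain has not yet exceeded $1$.
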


\begin{proof}
If $x_{n-2}=1+C(n-2)^{2/3}$ for some $C>0$ then by Lemma \ref{C} we have
\begin{align}\label{ineqC2}
 C\leq\frac{1}{C}+\sqrt{\frac{3}{2C}+\frac{1}{C^2}+1+\frac{1}{2C}}
\end{align}
since $n\geq 3$. The left hand side of \eqref{ineqC2} is an increasing function of $C$ while the right hand side is a decreasing one. Moreover, the equality holds for $C=2$. Hence, \eqref{ineqC2} holds if and only if $C\leq 2$. Thus $x_{n-2}\leq 1+2(n-2)^{2/3}$. As $x\in\Z$, we have $x\leq 1+\lfloor 2(n-2)^{2/3}\rfloor$. Moreover, if $n>3$ then $C=2$ does not satisfy inequality \eqref{ineqC}. Therefore the equality in \eqref{ineqxS3} holds only if $n=3$.

We are left with the proof of the "moreover" part. Let $C>2^{-1/3}$. Assume by contrary that there are infinitely many values of $n\in\N$ such that there exists $\overline{X}_{n}\in S(n)$ with $x_{n-2}\geq 1+C(n-2)^{2/3}$. Tending with $n$ to $+\infty$ in inequality \eqref{ineqC} we obtain
$$C\leq\frac{1}{\sqrt{2C}},$$
which means that $C\leq\frac{1}{\sqrt[3]{2}}$. This is a contradiction with our assumption that $C>2^{-1/3}$.
\end{proof}

\begin{rem}
{\rm Let $\overline{X}_{n}\in S(n)$. If the rate of the magnitude of $x_{n-2}$ is $2^{-1/3}n^{2/3}+o(n^{2/3})$ then it is also the rate of the magnitude of $x_{n-1}$ and $x_n$. Indeed, from the proof of Lemma \ref{C} we know that
\begin{align*}
    &\ 2^{-1/3}(n-2)^{2/3}+o((n-2)^{2/3})\leq x_{n-2}\leq x_{n-1}=\frac{\sigma_1(\overline{X}_{n-2})+d_1}{\sigma_{n-2}(\overline{X}_{n-2})-1}\\
    &\ \leq\frac{\sigma_1(\overline{X}_{n-2})+\sqrt{f(n,\overline{X}_{n-2})}}{\sigma_{n-2}(\overline{X}_{n-2})-1}\\
    &\ \leq 1+2^{1/3}(n-2)^{1/3}\\
    &\ \quad +\sqrt{3\cdot 2^{-2/3}(n-2)^{1/3}+2^{2/3}(n-2)^{2/3}+n-2+2^{-2/3}(n-2)^{4/3}}\\
    &\ =2^{-1/3}n^{2/3}+o(n^{2/3}),
\end{align*}
where $\sigma_{n-2}(\overline{X}_{n-2})=x_{n-2}$ (i. e. $\overline{X}_{n-2}=(\overline{1}_{n-3},x_{n-2})$) and $d_1\sim\sqrt{f(n,\overline{X}_{n-2})}=\sqrt{f(n,x_{n-2})}$, $n\to +\infty$. This means that also $d_2\sim\sqrt{f(n,x_{n-2})}$ and analogously we compute that
$$x_n=\frac{\sigma_1(\overline{X}_{n-2})+d_1}{\sigma_{n-2}(\overline{X}_{n-2})-1}=2^{-1/3}n^{2/3}+o(n^{2/3}).$$

In fact there is an infinite family of quadruples $(n,x,y,z)$ of positive integers such that $(\overline{1}_{n-3}, x, y, z)\in S_{3}(n)$ and $x\sim y\sim z\sim 2^{-1/3}n^{2/3}$, $n\to +\infty$. Namely,
\begin{align*}
    n= &\ 4(4k^3+2k^2+2k-2)^3+2,\\
    x= &\ 2(4k^3+2k^2+2k-2)\\
    &\ +32k^6+32k^5+32k^4-16k^3-8k^2-10k+8,\\
    y= &\ 2(4k^3+2k^2+2k-2)^2+1,\\
    z= &\ (4k^3+2k^2+2k-2)(8k^3+4k^2+6k-1)+1,
\end{align*}
where $k\in\N$ is sufficiently large. In the above family we have $y=1+2^{-1/3}(n-2)^{2/3}$. However, it is possible to give a family of quadruples $(n,x,y,z)$ such that $(\overline{1}_{n-3}, x, y, z)\in S_{3}(n)$ and $x=1+2^{-1/3}(n-2)^{2/3}$. Such a family is
\begin{align*}
    n= &\ 4(4k^3-10k^2+10k-6)^3+2,\\
    x= &\ 2(4k^3-10k^2+10k-6)^2+1,\\
    y= &\ (4k^3-10k^2+10k-6)(8k^3-20k^2+22k-11)+1,\\
    z= &\ 2(4k^3-10k^2+10k-6)\\
    &\ +32k^6-150k^5+352k^4-464k^3+392k^2-202k+58,
\end{align*}
where $k\in\N$ is sufficiently large.}
\end{rem}

Gathering all the results above we are able to enumerate all elements of $S(n)$ for $n\leq 16$. More precisely, we have the following.

\begin{thm}\label{smallsolutions}
We have the following equalities of sets:
{\small
\begin{align*}
S(3)&=\{(2, 3, 6), (2, 4, 4), (3, 3, 3)\};\\
S(4)&=\{(1, 2, 4, 14), (2, 2, 2, 6)\};\\
S(5)&=\{(1, 1, 2, 5, 25), (1, 1, 2, 7, 11), (1, 1, 3, 3, 22), (1, 1, 3, 4, 9), (1, 2, 2, 2, 18),\\
    &\quad\;\; (1, 2, 2, 4, 4), (2, 2, 2, 2, 3)\};\\
S(6)&=\{(1, 1, 1, 2, 6, 39), (1, 1, 1, 2, 7, 22), (1, 1, 1, 3, 4, 18), (1, 1, 1, 3, 6, 8)\};\\
S(7)&=\{(1, 1, 1, 1, 2, 7, 56), (1, 1, 1, 1, 2, 8, 31), (1, 1, 1, 1, 2, 11, 16), (1, 1, 1, 1, 3, 4, 46),\\
    &\quad\;\; (1, 1, 1, 1, 3, 6, 12), (1, 1, 1, 1, 4, 6, 7), (1, 1, 1, 2, 2, 3, 20)\};\\
S(8)&=\{(\overline{1}_{5}, 2, 8, 76), (\overline{1}_{5}, 2, 10, 30), (\overline{1}_{5}, 4, 4, 22), (\overline{1}_{4}, 2, 2, 3, 50), (\overline{1}_{3}, 2, 2, 2, 3, 5)\};\\
S(9)&=\{(\overline{1}_{6}, 2, 9, 99), (\overline{1}_{6}, 2, 15, 21), (\overline{1}_{6}, 3, 5, 78), (\overline{1}_{67}, 3, 6, 29), (\overline{1}_{6},, 3, 8, 15)\};\\
S(10)&=\{(\overline{1}_{7}, 2, 10, 125), (\overline{1}_{7}, 2, 11, 67), (\overline{1}_{8}, 2, 13, 38), (\overline{1}_{7}, 3, 6, 51), (\overline{1}_{7}, 3, 7, 28), (\overline{1}_{7}, 4, 4, 93),\\
     &\quad\;\; (\overline{1}_{7}, 4, 5, 26), (\overline{1}_{7}, 6, 7, 7), (\overline{1}_{6}, 2, 3, 3, 21), (\overline{1}_{6}, 3, 3, 3, 8)\};\\
S(11)&=\{(\overline{1}_{8}, 2, 11, 154), (\overline{1}_{8}, 2, 12, 82), (\overline{1}_{8},2,13,58),(\overline{1}_{8},2,14,46),(\overline{1}_{8},2,16,34),(\overline{1}_{8},2,18,28),\\
    &\quad \;\;(\overline{1}_{8},2,19,26),(\overline{1}_{8},2,22,22),(\overline{1}_{8},3,6,118),(\overline{1}_{8},3,7,43),(\overline{1}_{8},3,8,28), (\overline{1}_{8},3,10,18),\\
    &\quad \;\;(\overline{1}_{8},3,13,13),(\overline{1}_{8},4,5,40),(\overline{1}_{8},4,6,22),(\overline{1}_{8},4,7,16), (\overline{1}_{8},4,8,13),(\overline{1}_{8},4,10,10),\\
    &\quad \;\;(\overline{1}_{8},5,5,19),(\overline{1}_{8},6,6,10),(\overline{1}_{8},7,7,7), (\overline{1}_{7},2,2,4,97),(\overline{1}_{7},2,2,5,27),(\overline{1}_{7},2,2,6,17),\\
    &\quad\;\; (\overline{1}_{7},2,2,7,13),(\overline{1}_{6},2,2,2,3,11)\};\\
S(12)&=\{(\overline{1}_{9},2,12,186),(\overline{1}_{9},2,16,46),(\overline{1}_{9},2,18,36),(\overline{1}_{9},4,6,30),(\overline{1}_{9},4,8,16),\\
   &\quad\;\; (\overline{1}_{9},6,6,12),(\overline{1}_{8},2,3,4,18),(\overline{1}_{8},2,4,4,10),(\overline{1}_{8},2,4,6,6),(\overline{1}_{7},2,2,2,2,101)\};\\
S(13)&=\{(\overline{1}_{10},2,13,221),(\overline{1}_{10},2,23,31),(\overline{1}_{10},3,7,166),(\overline{1}_{10},3,12,21),(\overline{1}_{10},4,5,155),\\
   &\quad\;\;(\overline{1}_{10},5,5,34),(\overline{1}_{9},2,3,3,129),(\overline{1}_{9},3,3,3,16),(\overline{1}_{8},2,2,4,4,4)\};\\
S(14)&=\{(\overline{1}_{11},2,14,259),(\overline{1}_{11},2,15,136),(\overline{1}_{11},2,16,95),(\overline{1}_{11},2,19,54),(\overline{1}_{11},3,8,100),\\
     &\quad\;\;(\overline{1}_{11},3,10,38),(\overline{1}_{11},4,6,63),(\overline{1}_{11},4,7,34),(\overline{1}_{10},2,2,5,159),(\overline{1}_{9},2,2,3,5,5)\};\\
S(15)&=\{(\overline{1}_{12},2,15,300),(\overline{1}_{12},2,16,157),(\overline{1}_{12},2,25,40),(\overline{1}_{12},2,27,36),(\overline{1}_{12},3,8,222),\\
     &\quad \;\;(\overline{1}_{12},3,9,79),(\overline{1}_{12},3,13,27),(\overline{1}_{12},3,14,24),(\overline{1}_{12},4,6,105),(\overline{1}_{12},4,13,14),\\
     &\quad\;\;(\overline{1}_{11},2,3,4,45),(\overline{1}_{11},2,3,7,12),(\overline{1}_{10},2,2,2,3,33)\};\\
S(16)&=\{(\overline{1}_{13},2,16,344),(\overline{1}_{13},2,22,62),(\overline{1}_{13},4,6,232),(\overline{1}_{13},4,8,38),(\overline{1}_{13},8,8,10),\\
     &\quad\;\;(\overline{1}_{12},2,2,6,107),(\overline{1}_{12},2,2,7,46),(\overline{1}_{12},2,3,6,18),(\overline{1}_{11},2,2,2,3,46)\}.
\end{align*}
}
In particular, we have the following values of $|S(n)|$.
\begin{equation*}
\begin{array}{|c|cccccccccccccc|}
\hline
 n  & 3 & 4  & 5 & 6 & 7 & 8  & 9  & 10 & 11 & 12 & 13 & 14 & 15 & 16  \\
  \hline
 |S(n)| & 3 & 2 & 7 & 4 & 7 & 5  & 5 &10  & 26  & 10  & 9  & 10  & 13 & 9 \\
  \hline
\end{array}
\end{equation*}

\begin{center}{\rm Table 1. The number of element of $S(n)$ for $n\leq 16$.}\end{center}

Moreover, for each $n\in\N_{\geq 3}$ we have $S(n)\neq \emptyset$.
\end{thm}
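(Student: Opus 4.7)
The plan is to split the theorem into two independent parts: the enumeration of $S(n)$ for $3 \leq n \leq 16$ (a bounded finite search), and the non-emptiness assertion $S(n)\neq\emptyset$ for all $n\geq 3$ (an explicit construction).

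For the enumeration, I would organize a systematic finite search driven by the results already established. Fix $n \in \{3,\ldots,16\}$. By Lemma~\ref{lowerboundforxn-2} and Lemma~\ref{inumber}, any $\overline{X}_n\in S(n)$ lies in $S_i(n)$ for some $i$ with $3\leq i\leq 2+\log_2\binom{n}{2}$; in particular $i\leq 8$ for $n\leq 16$. For each admissible $i$ I would iterate over all non-decreasing tuples $\overline{Y}_{i-2}=(y_1,\ldots,y_{i-2})\in\N_{\geq 2}^{i-2}$ with $y_1\cdots y_{i-2}\leq \binom{n}{2}$, a finite set by Lemma~\ref{upperboundforxn-2}. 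Writing $\overline{X}_{n-2}=(\overline{1}_{n-i},\overline{Y}_{i-2})$, compute
\[
f(n,\overline{X}_{n-2})=\sigma_1(\overline{X}_{n-2})^2+\sigma_2(\overline{X}_{n-2})(\sigma_{n-2}(\overline{X}_{n-2})-1),
\]
and for every factorization $d_1d_2=f(n,\overline{X}_{n-2})$ with $d_1\leq d_2$, apply Theorem~\ref{sols} to compute the candidates
\[
x_{n-1}=\frac{\sigma_1(\overline{X}_{n-2})+d_1}{\sigma_{n-2}(\overline{X}_{n-2})-1},\qquad x_n=\frac{\sigma_1(\overline{X}_{n-2})+d_2}{\sigma_{n-2}(\overline{X}_{n-2})-1}.
\]
Retain those pairs for which the quotients are integers and $y_{i-2}\leq x_{n-1}\leq x_n$. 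This procedure, being exhaustive by Theorem~\ref{sols}, produces exactly the set $S_i(n)$; taking the union over $i$ gives $S(n)$. The counts in the table then follow by direct tallying.

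For the second assertion I would exhibit a uniform family. Set
\[
\overline{X}_n=\bigl(\overline{1}_{n-3},\,2,\,n,\,\tfrac{1}{2}n(3n-5)\bigr).
\]
Since $n(3n-5)$ is even for every $n\in\N_{\geq 3}$ (one of $n$, $3n-5$ is even), the last entry is a positive integer, and one checks $\tfrac{1}{2}n(3n-5)\geq n$ for $n\geq 3$, so the tuple is non-decreasing. A direct computation using the identities \eqref{sigmaid1} and \eqref{sigmaid2} with $\overline{X}_{k_1}=\overline{1}_{n-3}$ and $\overline{Y}_{k_2}=(2,n,\tfrac{1}{2}n(3n-5))$ gives $\sigma_2(\overline{X}_n)=\sigma_n(\overline{X}_n)=n^2(3n-5)$; this is precisely the extremal solution identified in the remark following Theorem~\ref{mainbound}, so no new work is needed beyond writing it down.

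The only real obstacle is the bookkeeping for the enumeration: the search space for $\overline{X}_{n-2}$, though provably finite, grows quickly with $n$ (especially with $i=3$, where $y_1\in\{2,\ldots,\binom{n}{2}\}$ and the resulting $f(n,\overline{X}_{n-2})$ can be quite large), so one must confirm that every divisor pair of $f(n,\overline{X}_{n-2})$ has been inspected. This is best handled by a computer algebra check, and correctness relies on the bounds of Lemma~\ref{upperboundforxn-2} and Theorem~\ref{mainbound} guaranteeing that nothing has been missed above the search cutoff.
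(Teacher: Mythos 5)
Your proposal is correct, and both halves match the paper in substance: the enumeration is a finite computer search justified by the established bounds, and non-emptiness is witnessed by the explicit family $\left(\overline{1}_{n-3},2,n,\tfrac{1}{2}n(3n-5)\right)$, which is exactly the paper's argument. The searches are organized differently, though. The paper fixes the number of non-unit entries via Lemma \ref{inumber}, enumerates $x_1\leq\cdots\leq x_{n-2}\leq x_{n-1}$ directly using the bounds $x_{n-2}\leq 1+\lfloor 2(n-2)^{2/3}\rfloor$ (Corollary \ref{xn2ineq}) and $x_{n-1}\leq\tfrac{1}{2}n(3n-5)$ (Theorem \ref{mainbound}), and then simply tests whether $x_n=\sigma_2(\overline{X}_{n-1})/(\sigma_{n-1}(\overline{X}_{n-1})-\sigma_1(\overline{X}_{n-1}))$ is a positive integer; you instead enumerate only the truncated tuples $\overline{X}_{n-2}$, constrained by the product bound $x_1\cdots x_{n-2}\leq\binom{n}{2}$ of Lemma \ref{upperboundforxn-2}, and recover $x_{n-1}$ and $x_n$ simultaneously from the divisor pairs of $f(n,\overline{X}_{n-2})$ via Theorem \ref{sols}. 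Your route has a much smaller outer loop (it never ranges over $x_{n-1}$ up to order $n^2$) at the cost of factoring $f(n,\overline{X}_{n-2})$ for each candidate; it is in fact the very method the authors use later to compute $S_3(n)$ for all $n\leq 300$. Both are exhaustive: yours because the proof of Theorem \ref{sols} is an equivalence (every solution arises from some divisor pair, and discarding pairs with $x_{n-1}<y_{i-2}$ only removes non-canonically ordered duplicates that reappear under another $\overline{Y}$), the paper's because the stated bounds cover all of $S(n)$. Your verification that $\sigma_2=\sigma_n=n^2(3n-5)$ for the witness family is also correct, so the proposal is complete modulo actually running the computation.
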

\begin{proof}
To find all elements of $S(n)$ we used a simple computer search. More precisely, we solved the equation (\ref{maineq}) for $x_{n}$ and get
$$
x_{n}=\frac{\sigma_{2}(\overline{X}_{n-1})}{\sigma_{n-1}(\overline{X}_{n-1})-\sigma_{1}(\overline{X}_{n-1})}.
$$
Next, for given $n$, we computed, via Lemma \ref{inumber}, the number of 1's in the solution of (\ref{maineq}) and used the bounds
$$
x_{n-1}\leq \frac{1}{2}n(3n-5),\quad x_{n-2}\leq  1+\lfloor 2(n-2)^{2/3}\rfloor
$$
to check all possibilities $x_{1}\leq x_{2}\leq\ldots\leq x_{n-2}\leq x_{n-1}$ for which $x_{n}$ computed above is a positive integer.

To get the last statement it is enough to note that for $n\geq 3$ we have that $\left(\overline{1}_{n-3}, 2, n, \frac{1}{2}n(3n-5)\right)\in S(n)$. All these computations took less then 2 hours on a laptop with 32 GB of RAM and i7 type processor.
\end{proof}

\begin{cor}
We have
$$
\limsup_{n\rightarrow +\infty}\max_{\overline{X}_{n}\in S(n)}\frac{x_{n}}{x_{n-1}}=+\infty.
$$
\end{cor}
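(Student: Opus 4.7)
The plan is to exhibit an explicit family of solutions, one per $n$, along which the ratio $x_n/x_{n-1}$ blows up linearly in $n$; since the $\limsup$ only requires a subsequence tending to infinity, this is all we need.

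The family is already in hand: at the end of the proof of Theorem \ref{smallsolutions} it is recorded that
$$\overline{X}_n^{*} := \left(\overline{1}_{n-3},\, 2,\, n,\, \tfrac{1}{2}n(3n-5)\right) \in S(n)$$
for every $n\geq 3$ (indeed the corresponding remark after Theorem \ref{mainbound} identifies this tuple as the extremal one realising $x_n = \tfrac{1}{2}n(3n-5)$). For this particular solution we read off directly that $x_{n-1}=n$ and $x_n = \tfrac{1}{2}n(3n-5)$, so
$$\frac{x_n}{x_{n-1}} = \frac{\tfrac{1}{2}n(3n-5)}{n} = \frac{3n-5}{2}.$$

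Hence
$$\max_{\overline{X}_n\in S(n)} \frac{x_n}{x_{n-1}} \;\geq\; \frac{3n-5}{2} \xrightarrow[n\to\infty]{} +\infty,$$
which immediately gives the stated $\limsup$. There is no real obstacle here: the work was done in Theorem \ref{mainbound} and its remark, which guaranteed both membership in $S(n)$ and the extremal size of $x_n$. The only thing to flag is that one should double-check, by direct substitution into \eqref{maineq}, that $\overline{X}_n^{*}$ genuinely satisfies $\sigma_2=\sigma_n$ (both sides evaluate to $n^2(3n-5)$ after grouping pair-contributions in $\sigma_2$); this amounts to a single routine expansion that the earlier results of the section already implicitly certify.
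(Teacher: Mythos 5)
Your proof is correct and is essentially identical to the paper's: the authors also use the family $\left(\overline{1}_{n-3},2,n,\tfrac{1}{2}n(3n-5)\right)\in S(n)$ and observe that the ratio $x_n/x_{n-1}=(3n-5)/2$ tends to infinity. The verification that both $\sigma_2$ and $\sigma_n$ equal $n^2(3n-5)$ for this tuple checks out.
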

\begin{proof}
Because for each $n\in\N_{\geq 3}$ we have $\left(1,\ldots, 1, 2, n, n(3n-5)/2\right)\in S(n)$ we get
$$
\limsup_{n\rightarrow +\infty}\max_{\overline{X}_{n}\in S(n)}\frac{x_{n}}{x_{n-1}}\geq \limsup_{n\rightarrow +\infty}\frac{n(3n-5)}{2n}=+\infty,
$$
and hence the result.
\end{proof}

\section{Analysis of $S_{3}(n)$ and applications}\label{sec3}

From Theorem \ref{sols} we know that $(\overline{1}_{n-3},x,y,z)\in S_{3}(n)$ if and only if
\begin{equation}\label{yzsol}
y=\frac{n+d_{1}-2}{x-1}+1,\quad z=\frac{n+d_{2}-2}{x-1}+1,
\end{equation}
where
\begin{equation*}
d_{1}d_{2}=\frac{1}{2}(n-2)((x+1)n+2x^2-3x-3)=f(n,(\overline{1}_{n-3},x))=:f(n,x).
\end{equation*}
It is clear that $|S_{3}(n)|$ is finite because from Corollary \ref{xn2ineq} we know that $x\leq 1+\lfloor 2(n-2)^{2/3}\rfloor$. Thus, we can even present a crude upper bound
$$
|S_{3}(n)|\leq \sum_{2\leq x\leq 1+\lfloor 2(n-2)^{2/3}\rfloor}\tau(f(n,x)),
$$
where $\tau(m)$ is the number of positive integer divisors of $m$.

\begin{rem}
{\rm Using the above characterization we computed the set $S_{3}(n)$ for each $n\leq 300$. More precisely, for given $n$ and each $x\leq 1+\lfloor 2(n-2)^{2/3}\rfloor$ we computed the set
$$
D(n,x):=\{d:\;d|f(n,x)\}
$$
of positive integer divisors of $f(n,x)$. Next, for any given $d\in D(n,x)$ such that $d\leq \sqrt{f(n,x)}$ we checked whether the numbers $y, z$ given by (\ref{yzsol}), where $d_1=d$ and $d_2=f(n,x)/d$, are integers.

In the considered range the value of $|S_{3}(n)|$ attains maximum equal to 213 for $n=299$.} 

%
\end{rem}

From Theorem \ref{smallsolutions} we know that $S(n)$ is nonempty. We prove that $|S(n)|\geq 3$. More precisely, the following is true.

\begin{thm}
\begin{enumerate}
\item For $n=3$ and each $n\in\N_{\geq 5}$ we have $|S_3(n)|\geq 3$.
\item We have
$$
|S_{3}(n)|\geq \frac{1}{2}\tau\left(\frac{1}{2}(n-2)(3n-1)\right).
$$
In particular $\limsup_{n\rightarrow +\infty}|S_{3}(n)|=\limsup_{n\rightarrow +\infty}|S(n)|+\infty$.
\end{enumerate}
\end{thm}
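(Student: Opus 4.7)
My plan centers on specializing Theorem \ref{sols} to $x = 2$, where the divisibility obstructions $(x-1)\mid n+d_j-2$ become trivial.

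Substituting $x=2$ into the formula defining $f$ yields $f(n,2) = \tfrac{1}{2}(n-2)(3n-1)$, while \eqref{yzsol} simplifies to $y = n+d_1-1$, $z = n+d_2-1$. These are automatically positive integers satisfying $y,z \ge n > 2 = x$ for every factorization $d_1 d_2 = f(n,2)$, so distinct unordered pairs $\{d_1,d_2\}$ produce distinct elements of $S_3(n)$. This immediately gives
\[
|S_{3}(n)| \;\ge\; \Bigl\lceil \tfrac{1}{2}\tau(f(n,2)) \Bigr\rceil \;\ge\; \tfrac{1}{2}\tau\!\left(\tfrac{1}{2}(n-2)(3n-1)\right),
\]
which is part (2). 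For the ``in particular'' clause I would take $n$ such that $n-2$ is a primorial; since $\gcd(n-2,(3n-1)/2)$ divides $5$, we have $\tau(f(n,2)) \ge \tau(n-2)/3$, which tends to $+\infty$, and hence so does $|S(n)|$ via the chain $|S(n)| \ge |S_3(n)|$.

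For part (1), the case $n = 3$ is immediate from Theorem \ref{smallsolutions} (which lists three elements of $S(3)=S_3(3)$), and the range $5 \le n \le 16$ follows from Table~1 by direct inspection. For $n \ge 17$ my plan is to combine two explicit constructions. Writing $f(n,2) = AB$ with $A, B \ge 2$ (take $A = n-2$, $B = (3n-1)/2$ when $n$ is odd, else $A = (n-2)/2$, $B = 3n-1$), the bound $\gcd(A,B)\mid 5$ forces $\tau(f(n,2)) \ge 4$, yielding at least two solutions with $x_{n-2} = 2$. To secure a third distinct solution I would pass to another value of $x$: for $n$ odd take $x=3$, where $f(n,3) = (n-2)(2n+3)$ is odd (so the parity condition on $d_1$ is automatic) and $\tau(f(n,3)) \ge 4$ by the analogous gcd argument (now $\gcd \mid 7$); this produces an element of $S_3(n)$ with $x_{n-2}=3$, visibly distinct from the $x_{n-2}=2$ ones. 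For $n \equiv 2 \pmod 4$ the same $x=3$ construction still works since $4\mid n-2$ permits a factorization of $f(n,3)$ into two even parts. For $n \equiv 0 \pmod 4$ I would instead use $x = 4$ and analyze $f(n,4) = (n-2)(5n+17)/2$ modulo~$3$.

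The principal obstacle I foresee is precisely the $n \equiv 0 \pmod 4$ sub-case: the congruence $d_1 \equiv 2 - n \pmod 3$ imposed by $x=4$ depends on $n \pmod{12}$, and exhibiting a suitable divisor of $f(n,4)$ in the correct residue class uniformly in $n$ will require a careful split into sub-cases determined by $n \bmod{12}$. If any sub-class remains recalcitrant for infinitely many $n$, one would fall back either on $x = 5$ or on a direct elementary argument that $\tau(f(n,2)) \ge 5$ (which suffices by itself) outside an explicit finite exceptional set.
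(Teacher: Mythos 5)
Your part (2) is correct and is essentially the paper's own argument: specialize Theorem \ref{sols} to $x=2$, where $f(n,2)=\tfrac12(n-2)(3n-1)$ and every factorization $d_1d_2=f(n,2)$ automatically yields the integral tuple $(\overline{1}_{n-3},2,n+d_1-1,n+d_2-1)\in S_3(n)$, so that $|S_3(n)|\geq\lceil\tfrac12\tau(f(n,2))\rceil$; your primorial choice of $n-2$ plays the same role as the paper's congruence $n\equiv 2\pmod{2q_1\cdots q_k}$ in forcing $\tau(f(n,2))\to\infty$ along a subsequence.

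Part (1) contains a genuine gap, and you have located it yourself: the case $n\equiv 0\pmod{4}$. Your $x=2$ analysis gives two solutions, and $x=3$ gives a third precisely when $n\not\equiv 0\pmod 4$ (consistent with the paper's remark to that effect), but for $n\equiv 0\pmod 4$ you only sketch a case split modulo $12$ for $x=4$, and \emph{both} of your proposed fallbacks fail. The fallback $\tau(f(n,2))\geq 5$ breaks whenever $\tfrac{n-2}{2}$ and $3n-1$ are both prime (then $\tau(f(n,2))=4$; already $n=8$ gives $f(8,2)=3\cdot 23$, and only two of the three elements of $S_3(8)$ have $x_{n-2}=2$), and this cannot be ruled out for infinitely many $n\equiv0\pmod4$. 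The fallback $x=5$ is structurally dead: the condition $4\mid n+d_i-2$ forces $d_1\equiv d_2\equiv 2\pmod 4$, hence $v_2(f(n,5))=2$, whereas $f(n,5)=(n-2)(3n+16)$ has $2$-adic valuation $1+v_2(3n+16)\geq 3$ when $4\mid n$. The repair is that $x=4$ closes uniformly with no case split: take $d_1=\tfrac{n-2}{2}$, $d_2=5n+17$; then $d_1d_2=f(n,4)$ and $n+d_1-2=\tfrac32(n-2)$, $n+d_2-2=3(2n+5)$ are divisible by $3$ for \emph{every} even $n$, producing $(\overline{1}_{n-3},4,\tfrac{n}{2},2(n+3))\in S_3(n)$ for even $n\geq 8$. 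This is exactly what the paper does, except that it bypasses the divisor bookkeeping and simply exhibits three explicit tuples for each parity (e.g.\ $(\overline{1}_{n-3},2,n,\tfrac12 n(3n-5))$ always, together with $(\overline{1}_{n-3},3,n-1,\tfrac32(n+1))$ and $(\overline{1}_{n-3},2,2n-3,\tfrac12(5n-3))$ for odd $n$, and $(\overline{1}_{n-3},4,\tfrac{n}{2},2(n+3))$ and $(\overline{1}_{n-3},2,\tfrac12(3n-4),2(2n-1))$ for even $n$), handling small $n$ by the enumeration. A last small point: Table 1 records $|S(n)|$, not $|S_3(n)|$, so for $5\leq n\leq 16$ you must count within the explicit lists of Theorem \ref{smallsolutions} (note that $|S_3(4)|=1$, which is why $n=4$ is excluded from the claim).
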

\begin{proof}
To get the first statement we note that $|S_{3}(3)|=3, |S_{3}(5)|=3, |S_{3}(6)|=4, |S_{3}(7)|=6, |S_{3}(8)|=3$, i.e., we can assume that $n\geq 9$. We observe that for each $n\geq 5$, the $n$-tuple $(\overline{1}_{n-3}, 2, n, \frac{1}{2}n(3n-5))\in S_{3}(n)$. If $n\equiv 1\pmod*{2}$ we have two additional $n$-tuples
$$
\left(\overline{1}_{n-3}, 2, 2n-3, \frac{1}{2}(5n-3)\right),\quad \left(\overline{1}_{n-3}, 3, n-1, \frac{3}{2}(n+1)\right).
$$
Similarly, if $n\equiv 0\pmod*{2}$ we have the following additional solutions
$$
\left(\overline{1}_{n-3}, 2, \frac{1}{2}(3n-4), 2(2n-1)\right),\quad \left(\overline{1}_{n-3}, 4, \frac{1}{2}n, 2(n+3)\right).
$$

The second statement is a consequence of the following reasoning. Take $x=2$ in (\ref{yzsol}). Then the corresponding values of $y, z$ are
$$
y=n+d_{1}-1,\quad z=n+\frac{f(n,2)}{d_{1}}-1
$$
and are integers. Thus, the number of different pairs satisfying $y\leq z$ is equal to $\left\lceil\frac{1}{2}\tau(f(n,2))\right\rceil$ and we get the required inequality. Moreover, if we take $n\equiv 2\pmod*{2q_{1}\cdot\ldots\cdot q_{k}}$, where $k\in\N_{+}$ and $q_{1}<\ldots<q_{k}$ are primes, then $\frac{1}{2}\tau(f(n,2))\geq k$ and $\limsup_{n\rightarrow +\infty}|S_{3}(n)|=+\infty$. Thus $\limsup_{n\rightarrow +\infty}|S(n)|=+\infty$.


\end{proof}

\begin{rem}
{\rm Let us note that if we take $x=3$ in (\ref{yzsol}) then the necessary and sufficient condition for $n$ in order to get $(\overline{1}_{n-3}, 3, y, z)\in S(n)$ is $n\not\equiv 0\pmod*{4}$.
}
\end{rem}
We are in position to compute the value of $\liminf_{\overline{X}_{n}\in S(n)}x_{n}/x_{n-1}$. More precisely, the following is true.

\begin{thm}\label{xneqxn}
There are infinitely many pairs $(n,X)$ of positive integers such that $(\overline{1}_{n-3}, 2, X, X)\in S_{3}(n)\subset S(n)$. In particular, we have
$$
\liminf_{n\rightarrow +\infty}\min_{\overline{X}_{n}\in S(n)}\frac{x_{n}}{x_{n-1}}=1.
$$
\end{thm}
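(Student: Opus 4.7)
The plan is to exploit the parametrization from \eqref{yzsol} with $x=2$. Since $x-1=1$, it yields $y=n+d_{1}-1$ and $z=n+d_{2}-1$, so to obtain $y=z=X$ it suffices to take $d_{1}=d_{2}=m$ with $m^{2}=f(n,2)=\tfrac{1}{2}(n-2)(3n-1)$. The problem thus reduces to producing infinitely many $n\in\N_{\geq 3}$ such that $(n-2)(3n-1)=2m^{2}$ is solvable in positive integers $m$; each such pair gives the required element $(\overline{1}_{n-3},2,X,X)\in S_{3}(n)$ with $X=n+m-1$.

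I would convert this to a Pell-type equation. Multiplying by $12$ and completing the square yields
$$12(n-2)(3n-1)=(6n-12)(6n-2)=(6n-7)^{2}-25,$$
so, setting $w=6n-7$, the condition becomes
$$w^{2}-24m^{2}=25,\qquad w\equiv 5\pmod{6},\quad w\geq 11.$$
The pair $(w,m)=(11,2)$ is a solution, corresponding to $n=3$, $X=4$, i.e.\ the known triple $(2,4,4)\in S(3)$ that already appears in Theorem \ref{smallsolutions}.

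To generate infinitely many further solutions, multiply by the square of the fundamental unit $5+\sqrt{24}$ of $a^{2}-24b^{2}=1$. Explicitly, if $(w_{k},m_{k})$ satisfies $w^{2}-24m^{2}=25$, then
$$w_{k+1}=49w_{k}+240m_{k},\qquad m_{k+1}=10w_{k}+49m_{k}$$
does as well. Since $49\equiv 1$ and $240\equiv 0\pmod{6}$, the congruence $w_{k}\equiv 5\pmod{6}$ is preserved, so starting from $(w_{0},m_{0})=(11,2)$ produces a strictly increasing sequence of admissible $w_{k}$, hence an infinite sequence of integers $n_{k}=(w_{k}+7)/6$ together with $X_{k}=n_{k}+m_{k}-1\geq 2$. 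Each pair $(n_{k},X_{k})$ realises an element of $S_{3}(n_{k})\subset S(n_{k})$, proving the first assertion.

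The liminf statement follows immediately: the monotonicity condition $x_{i}\leq x_{i+1}$ built into the definition of $S(n)$ forces $x_{n}/x_{n-1}\geq 1$ for every $\overline{X}_{n}\in S(n)$, so the minimum is always at least $1$, while the family above attains the value $1$ for infinitely many $n$. The only delicate point of the argument is the modular bookkeeping in the Pell step: multiplication by $5+\sqrt{24}$ itself alternates $w$ between the residues $5$ and $1$ modulo $6$, so one must pass to the square $49+10\sqrt{24}$ of the fundamental unit to keep every iterate integer-valued; everything else is direct verification.
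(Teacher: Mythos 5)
Your argument is correct and follows essentially the same route as the paper: reduce the condition $x_{n-1}=x_n$ for $x=2$ to $d^2=f(n,2)$, rewrite it as the Pell-type equation $w^2-24d^2=25$ with $w=6n-7$, and generate infinitely many solutions with $w\equiv 5\pmod{6}$ via the unit $5+\sqrt{24}$. The only (immaterial) difference is the choice of solution family — you iterate the square of the fundamental unit on the seed $(11,2)$ (giving $n=3,171,\dots$), while the paper scales solutions of $Y^2-24d^2=1$ by $5$ (giving $n=42,4002,\dots$); both produce valid infinite families, and your check that one must square the unit to preserve $w\equiv 5\pmod 6$ is the same parity bookkeeping the paper handles by restricting to odd indices.
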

\begin{proof}
To get the result we show that there are infinitely many values of $n$ such that there is an integer $X$ such that $(\overline{1}_{n-3}, 2, X, X)\in S_{3}(n)\subset S(n)$. We know that this is enough to show that $d^2=f(n,2)$ has infinitely many solutions in positive integers. Then $X=n+d-1$ is a solution we are looking for. The equation $d^2=f(n,2)$ is equivalent with Pell type equation $Y^2-24d^2=25$, where $Y=6n-7$. Using standard method, one can check that for each $m\in\N$ the pair $(d_{m},Y_{m})$, where $d_{m}=f_{m}(1, 10), Y_{m}=f_{m}(5, 49)$ for $m=0, 1, \ldots$ and $f_{m}(a,b)=10f_{m-1}(a,b)-f_{m-2}(a,b)$ with $f_{0}(a,b)=a, f_{1}(a,b)=b$, satisfies $Y_{m}^2-24d_{m}^2=1$ and hence the pair $(5d_{m}, 5Y_{m})$ solves the equation $Y^2-24d^2=25$. To get integer values of $n$ we need to take $m\equiv 1\pmod*{2}$. With $n$ chosen in this way the equation $\sigma_{2}(\overline{X}_{n})=\sigma_{n}(\overline{X}_{n})$ has the solution
$$
\overline{X}_{n(m)}=(\overline{1}_{n(m)-3}, 2, n(m)+d_{2m+1}-1,  n(m)+d_{2m+1}-1), m=0, 1, \ldots,
$$
satisfying $x_{n-1}=x_{n}$ and hence the result.
\end{proof}

As an example, we computed the values of $n=n(m)$ and the corresponding solution $x_n$ for $m=0, 1, \ldots, 5$.
\begin{equation*}
\begin{array}{|c|llllll|}
\hline
 m & 0 & 1 & 2 & 3 & 4 & 5 \\
 \hline
 n & 42 & 4002 & 392042 & 38416002 & 3764376042 & 368870436002 \\
 x_{n} & 91 & 8901 & 872191 & 85465801 & 8374776291 & 820642610701 \\
 \hline
\end{array}
\end{equation*}
\begin{center}
Table 2. Some values of $n$ such that there is an $x_{n}$ satisfying $(\overline{1}_{n-3}, 2, x_{n}, x_{n})\in S(n)$.
\end{center}



In Theorem \ref{xneqxn} we proved that limes inferior of $\op{min}\{x_{n}/x_{n-1}:\;\overline{X}_{n}\in S(n)\}$ is equal to 1. In this context it is natural to investigate the set
$$
D:=\left\{\frac{x_{n}}{x_{n-1}}:\;\overline{X}_{n}\in S(n), n\in\N_{\geq 3}\right\}\subset \Q
$$
and the set
$$
D_{i}:=\left\{\frac{x_{n}}{x_{n-1}}:\;\overline{X}_{n}\in S_{i}(n)\right\}\subset \Q
$$
where $i\in\N_{\geq 3}$, and ask whether $D$ and $D_{i}$ are dense in the set $[1,+\infty)$.

We prove the following.

\begin{thm}
The sets $D_3$ and $D$ are dense in $[1,+\infty)$.
\end{thm}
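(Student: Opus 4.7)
The plan is to exhibit, for a dense set of rationals $r=p/q\geq 1$ in lowest terms, an infinite family of elements of $S_3(n)$ of the specific shape $(\overline{1}_{n-3},2,qt,pt)$ achieving $x_n/x_{n-1}=p/q$ exactly. Substituting this tuple into \eqref{maineq} (alternatively specialising Theorem \ref{sols} with $x=2$, so that $\sigma_{n-2}(\overline{X}_{n-2})=2$ and $f(n,2)=(n-2)(3n-1)/2$) yields a quadratic in $n$ with discriminant
$$\Delta_{t}=4At^2-4(p+q)t+9,\qquad A:=p^2+4pq+q^2.$$
Setting $X=2At-(p+q)$, the requirement $\Delta_t=D^2$ rewrites as the generalised Pell equation
$$X^2-AD^2=-2(4p+q)(p+4q),$$
for which the degenerate seed $(X_0,D_0)=(-(p+q),3)$, coming from $t=0$, is visible by direct substitution.

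Assuming $A$ is not a perfect square, I would take a fundamental solution $(E,F)$ of $E^2-AF^2=1$ and iterate $(X_{k+1},D_{k+1})=(EX_k+AFD_k,FX_k+ED_k)$, which preserves the value $X^2-AD^2$ and produces an infinite sequence with $D_k\to+\infty$. To recover honest solutions of \eqref{maineq} one needs $t_k=(X_k+(p+q))/(2A)\in\N$ together with $n_k=(3-2(p+q)t_k+D_k)/2\in\N_{\geq 3}$. Because the iteration matrix has determinant $1$, it lies in $SL_2(\Z)$ and therefore has finite order $T$ modulo $4A$; hence $(X_{kT},D_{kT})\equiv(X_0,D_0)\pmod{4A}$ for every $k$, which secures both integrality of $t_{kT}$ and the oddness of $D_{kT}$ needed for $n_{kT}\in\Z$. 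Positivity of $n_{kT}$ for large $k$ follows from $A-(p+q)^2=2pq>0$, because this forces $\sqrt{A}>p+q$ and therefore $D_k>2(p+q)t_k$ asymptotically; and $t_{kT}\geq 1$ is automatic once $k\geq 1$.

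It remains to show that the \emph{admissible} rationals (those $p/q>1$ in lowest terms with $A$ not a perfect square) are dense in $(1,+\infty)$. The factorisation $A=(p+2q)^2-3q^2$ turns $A=m^2$ into $(p+2q-m)(p+2q+m)=3q^2$, which pins $p$ down to at most $\tau(3q^2)$ values once $q$ is fixed. Letting $q$ range over primes bigger than $3$, any interval $(a,b)\subset(1,+\infty)$ contains $\sim(b-a)(q-1)$ integers $p$ with $\gcd(p,q)=1$ lying in $(aq,bq)$, of which at most $\tau(3q^2)=6$ need be avoided; taking $q$ sufficiently large supplies an admissible $p/q\in(a,b)$. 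Combined with $1\in D_3$ furnished by Theorem \ref{xneqxn}, this yields the required density of $D_3$, and hence of $D\supseteq D_3$, in $[1,+\infty)$.

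The main obstacle will be the middle step: while the existence and exponential growth of Pell solutions is classical, some care is required to ensure that the residue conditions $X_k\equiv-(p+q)\pmod{2A}$ and $D_k\equiv 1\pmod 2$ recur along a full arithmetic progression of indices $k$. Exploiting $\det L=1$ to guarantee finite order of the iteration modulo $4A$ is what lets one pass from infinitely many Pell pairs to infinitely many valid $(n,t)\in\N_+^2$, and hence to genuine elements of $S_3(n)$ whose ratio $x_n/x_{n-1}$ equals $p/q$ exactly.
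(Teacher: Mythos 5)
Your proof is correct, but it takes a genuinely different route from the paper. The paper also specializes to $x=2$, but instead of prescribing the ratio in advance it writes $d_1=at$, $d_2=bt$ for fixed $a,b\in\N_+$, converts $abt^2=f(n,2)$ into the Pell equation $u^2-24abt^2=25$ with $u=6n-7$ (seed $(5,0)$), and then only tracks the \emph{asymptotics} $d_1\sim a(6n-7)/\sqrt{24ab}$, $d_2\sim b(6n-7)/\sqrt{24ab}$; this shows that the closure of $D_3$ contains all numbers $\bigl(\sqrt{6ab}+3b\bigr)/\bigl(\sqrt{6ab}+3a\bigr)$, which are dense in $[1,+\infty)$ because $x\mapsto(\sqrt{6x}+x)/(\sqrt{6x}+1)$ is a continuous surjection of $[1,+\infty)$ onto itself. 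You instead hit each admissible rational $p/q$ \emph{exactly} via tuples $(\overline{1}_{n-3},2,qt,pt)$, which forces you to do more bookkeeping: the quadratic in $n$, the Pell form $X^2-AD^2=-2(4p+q)(p+4q)$ with $A=p^2+4pq+q^2$ and seed $(-(p+q),3)$, the congruence recurrence modulo $4A$ to secure integrality and parity, and the divisor-counting argument showing that $A$ is a square for only $O(1)$ values of $p$ per prime $q$. All of these steps check out (I verified the discriminant identity, the identity $(p+q)^2-9A=-2(4p+q)(p+4q)$, the factorization $A=(p+2q)^2-3q^2$, and the sign analysis giving $n\to\infty$ from $A>(p+q)^2$); note also that a single valid $(n,t)$ per ratio already suffices, so the full infinite orbit is more than you need. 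What your approach buys is the stronger statement that $D_3$ itself contains a dense set of explicitly realized rationals, rather than merely having a dense set of limit points; what the paper's approach buys is brevity, since passing to limits lets it ignore all integrality and congruence issues beyond the single condition $u\equiv 5\pmod 6$.
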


\begin{proof}
It suffices to show that $D_3$ is dense in $[1,+\infty)$ as $D\supset D_3$.

One can check that the set of limit points of $D_{3}$ contains the set
$$
\left\{\frac{\sqrt{24ab}+b}{\sqrt{24ab}+a}:\;a,b\in\N_{+},\;24ab\;\mbox{is not a square}\right\}
$$
Indeed, if $(\overline{1}_{n-3}, x, x_{n-1}, x_{n})\in S_{3}(n)$ and we take $x=2$ in (\ref{yzsol}) then $x_{n-1}=n+d_{1}-1, x_{n}=n+d_{2}-1$ with $d_{1}d_{2}=f(n,2)$. If we put $d_{1}=at$ and $d_{2}=bt$ then we deal with Pell type equation $abt^2=f(n,2)$ or equivalently
\begin{align}\label{pell}
    u^2-24abt^2=25,
\end{align}
where $u=6n-7$. From the theory of Pell equations we know that this equation has infinitely many solutions in positive integers provided that $24ab$ is not a square of an integer. Moreover, since $(u,t)=(5,0)$ is a solution of \eqref{pell}, there are infinitely many solutions of \eqref{pell} with $u\equiv 5\pmod{6}$, which ensures the integrality of $n$. Hence, $(d_{1}, d_{2}, n)$ satisfies
$$
d_{1}\sim a\frac{6n-7}{\sqrt{24ab}}, \quad d_{2}\sim b\frac{6n-7}{\sqrt{24ab}}, \quad n\to +\infty.
$$
In consequence
\begin{align*}
    &\ \lim_{n\rightarrow +\infty}\frac{x_{n}}{x_{n-1}}=\lim_{n\rightarrow +\infty}\frac{n+d_{2}-1}{n+d_{1}-1}=\lim_{n\rightarrow +\infty}\frac{n+b\frac{6n-7}{\sqrt{24ab}}-1}{n+a\frac{6n-7}{\sqrt{24ab}}-1}=\frac{\sqrt{24ab}+6b}{\sqrt{24ab}+6a}\\
    =&\ \frac{\sqrt{6ab}+3b}{\sqrt{6ab}+3a}=\frac{\sqrt{6\frac{b}{a}}+3\frac{b}{a}}{\sqrt{6\frac{b}{a}}+3}.
\end{align*}
Since the function $[1,+\infty)\ni x\mapsto\frac{\sqrt{6x}+x}{\sqrt{6x}+1}\in [1,+\infty)$ is a continuous surjection, we infer that the set
$$
\left\{\frac{\sqrt{24ab}+b}{\sqrt{24ab}+a}:\;a,b\in\N_{+},\;24ab\;\mbox{is not a square}\right\}
$$
is dense in $[1,+\infty)$. Hence, the closure of $D_3$ is the whole interval $[1,+\infty)$.
\end{proof}

\section{Questions and conjectures}\label{sec4}

In this section we collect some questions and conjectures which appeared during various stages of our investigations.

The most important (and probably the most difficult) is the following

\begin{ques}
What is the order of growth of the number $|S(n)|$? Is it true that there is $\varepsilon >0$ such that $|S(n)|>n^{\varepsilon}$? In particular, is the equality
$$
\liminf_{n\rightarrow +\infty}|S(n)|=+\infty
$$
true?
\end{ques}

This is a difficult question. We believe that the inequality $|S(n)|>\log n$ is true. In this context one can also ask the following questions.

\begin{ques}
What is the average order of the function $|S(n)|$?
\end{ques}

\begin{ques}
Is the function $|S(n)|$ eventually increasing?
\end{ques}

\bigskip

For given $n\in\N_{\geq 3}$ and $\overline{X}_{n}\in S(n)$ let us put $X_{n}=\{x_{1},\ldots, x_{n}\}$ and
$$
M(n):=\op{min}\{|X_{n}|:\;\overline{X}_{n}\in S(n)\}.
$$

From emptiness of $S_{i}(n)$ for $i=0, 1, 2$ and our investigations concerning the structure of the set $S_{3}(n)$, or to be more precise the proof of Theorem \ref{xneqxn}, we know that
$$
\liminf_{n\rightarrow+\infty}M(n)\leq 3.
$$

Let $n\geq 6$. Then $M(n)=1$ is impossible by Corollary \ref{ones}. Moreover, $M(n)=2$ if and only if there is $k\in\{1,\ldots, n-1\}$ and $x\in\N_{\geq 2}$ such that
\begin{equation}\label{M2}
\sigma_{2}(\overline{1}_{n-k},\overline{x}_{k})=\sigma_{n}(\overline{1}_{n-k},\overline{x}_{k}),
\end{equation}
where $\overline{x}_{k}=(x,\ldots,x)$, where we have $k$-occurrences of $x$.

Equivalently, we deal with the equation
\begin{equation*}
y^2=8 x^k+4 k x^2-4 k x+1=:P_{k}(x),
\end{equation*}
where $y=2n+2k(x-1)-1.$ Let $C_{k}$ denotes the curve defined by the equation above. One can check, using \cite[Theorem 3.1]{Ga}, that the discriminant of $P_{k}(x)$ is non-zero, and thus the polynomial $P_{k}$ has no multiple roots (moreover, since the Newton polygon of $P_{k}$ with respect to $2$-adic valuation has only one slope, equal to $3/k$, we see that for each $k\in\N_{\geq 3}$ not divisible by $3$ the polynomial $P_{k}$ is irreducible over the field $\Q_2$ of $2$-adic numbers and thus irreducible over $\Q$). In consequence, the genus of the curve $C_{k}$ is equal to $\lfloor (k-1)/2\rfloor$. For $k=3$ the curve $C_{k}$ is an elliptic curve and we have $C_{k}(\Q)\simeq \Z\times\Z_{3}$, where the infinite part is generated by the point $P=(8,24)$, and the torsion part is generated by $T=(0,1)$. Using standard methods one can check that the only integer points on $C_{3}$ which lead to solutions of the equation (\ref{M2}) are: $(x, y)=(3,17)$ which leads to $(k, n, x)=(3, 3, 3)$ (and the value $M(3)=1$) and $(x,y)=(7,57)$ which leads to $(k, n, x)=(3, 11, 7)$.

Similar analysis can be performed in the case $k=4$. For example, one can use the procedure {\tt IntegralQuarticPoints} of the Magma computational package (\cite{Mag}) and find that all the integral points on $C_{4}$ are:
$$(1,\pm 3), (0, \pm 1), (7, \pm 141), (-2, \pm 15), (-3, \pm 29), (172, \pm 83679).$$
The point $(7, 141)$ gives the solution $(k, n, x)=(4, 47, 7)$ and the point $(172, 83679)$ gives the solution $(k, n, x)=(4, 41156, 172)$ of the equation (\ref{M2}).
If $k\geq 5$ then the curve $C_{k}$ is of genus $\geq 2$, and from the Faltings theorem we know that the set $C_{k}(\Q)$ is finite. From numerical calculations we expect that besides the trivial point $(0,\pm 1)$ there is no additional integral points on $C_{k}$. We thus formulate the following.

\begin{conj}
The only values of $n\in\N_{\geq 3}$ such that $M(n)=2$ are $n=3, 4, 5, 11, 41156$. In particular,  $\liminf_{n\rightarrow+\infty}M(n)=3$.
\end{conj}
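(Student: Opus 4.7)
The plan is to prove the conjecture by analyzing the integral points on each curve
$$
C_k: y^2 = P_k(x) = 8x^k + 4kx^2 - 4kx + 1, \quad k \geq 5,
$$
and showing that the only integer solutions with $x \geq 2$ are precisely the two already exhibited (at $(k,x) = (3,7)$ and $(k,x) = (4,7)$, $(4,172)$), so that no new ones arise for $k \geq 5$. Since $C_k$ has genus $g = \lfloor (k-1)/2 \rfloor \geq 2$ for $k \geq 5$, Faltings's theorem already gives finiteness, but is ineffective, so a mixture of effective techniques is required.

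The first step would be to handle small $k$ computationally. For each $k$ in a range such as $5 \leq k \leq K_0$ (with $K_0$ as large as computing resources permit, perhaps $30$), I would run the Chabauty--Coleman method, or $2$-descent followed by the Mordell--Weil sieve, on $C_k$, as implemented in Magma's hyperelliptic curves package. The central computational ingredient is to determine generators of $J(C_k)(\Q)$ (or at least show $\op{rank} J(C_k)(\Q) < g$); when this inequality holds, Chabauty at a prime of good reduction produces a finite list of $p$-adic candidates, and combined with a height-bounded enumeration this certifies the list of integral points. For those $k$ where the rank equals $g$, one would fall back on covering collections, elliptic Chabauty on quotients of $C_k$ (exploiting the symmetry $P_k(x)$ has coefficients depending only on $k$), or Mordell--Weil sieving against several primes simultaneously.

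Beyond the explicit range, I would fix $x \geq 2$ and view the equation as an exponential Diophantine equation in the pair $(y,k)$. Effective lower bounds on $|y - 2\sqrt{2}\,x^{k/2}|$ obtained from Baker's theorem on linear forms in logarithms, applied to $\log 2$ and $\log x$, yield an explicit (astronomically large) upper bound $k \leq K(x)$. Reducing $K(x)$ to a size amenable to computation then requires LLL--based continued-fraction reduction of the resulting linear form, after which only finitely many pairs $(x,k)$ remain to inspect. In parallel, for each fixed small $k$ the curve $C_k$ is a hyperelliptic curve for which bounds on $x$ follow either from the Chabauty output above or, if one prefers to avoid rank computations, from Thue--Mahler techniques applied to the near-factorization $y^2 - 8x^k = 4kx(x-1) + 1$.

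The main obstacle is precisely the middle regime where both $x$ and $k$ are moderately large: no single machinery covers all pairs $(x,k)$ simultaneously, and the effective bounds from Baker's method are too large for a clean computer verification up to $n = 41156$. A more elegant but speculative route would be to produce a modular/covering obstruction: choose auxiliary primes $q$ and exhibit a covering system of congruences on $(k \bmod \op{ord}_q(x^2), x \bmod q)$ forcing $P_k(x)$ to be a quadratic non-residue modulo at least one $q$ from the system, for every $(k,x)$ outside the claimed solution set. Whether such a covering system exists is the crux of the conjecture; failing that, I would settle for a conditional result (under GRH, say, to make rank computations algorithmic) establishing the statement for all $n$ below an explicit very large bound, which already suffices to reinforce the guess that $M(n) = 2$ happens only finitely often.
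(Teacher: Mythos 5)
You have been asked to prove something that the paper itself only states as a conjecture: the authors give no proof. They resolve $k=3$ completely (an elliptic curve of rank $1$, with an integral-point computation), resolve $k=4$ via Magma's {\tt IntegralQuarticPoints}, and for every fixed $k\geq 5$ invoke Faltings' theorem for the finiteness of $C_k(\Q)$ together with numerical evidence that no nontrivial integral points exist. Your proposal is candid about being a programme rather than a proof, and it does not close the conjecture either. The essential gap --- which you correctly identify yourself --- is uniformity in $k$: Chabauty--Coleman, $2$-descent and the Mordell--Weil sieve handle one curve $C_k$ at a time, there are infinitely many $k$, and the genus and the cost of controlling $J(C_k)(\Q)$ grow with $k$. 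Your Baker-type step for fixed $x$ is also not routine as stated: in $y^2-8x^k=4kx(x-1)+1$ the right-hand side varies with $k$, so this is not a Pillai or Ramanujan--Nagell equation with a fixed constant, and even granting an explicit bound $k\leq K(x)$ the ``middle regime'' of moderately large $(x,k)$ that you flag remains uncovered. So nothing in your plan is wrong in spirit --- it genuinely goes beyond what the paper does for $k\geq 5$ --- but neither your argument nor the paper's constitutes a proof of the statement.

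Two concrete points you should address in any serious attempt. First, the paper's own data appear to contradict the conjectured list: the integral point $(7,141)$ on $C_4$ gives $(k,n,x)=(4,47,7)$, i.e. $(\overline{1}_{43},7,7,7,7)\in S(47)$ (indeed $\binom{43}{2}+4\cdot 43\cdot 7+\binom{4}{2}\cdot 7^2=2401=7^4$), and since $M(47)=1$ is excluded by Corollary \ref{ones}, this forces $M(47)=2$; so $n=47$ seems to be missing from the list $3,4,5,11,41156$, and your write-up should either account for it or correct the statement. Second, the reduction of ``$M(n)=2$'' to the curves $C_k$ presupposes that one of the two distinct values equals $1$, which is only guaranteed for $n\geq 6$ (Corollary \ref{ones}); for $n\in\{3,4,5\}$ there are two-valued solutions with no $1$'s, such as $(2,2,2,6)$ and $(2,2,2,2,3)$, and these small cases must be settled separately, as is done by the enumeration in Theorem \ref{smallsolutions}.
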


Since $\left(\overline{1}_{n-3},2,n,\frac{1}{2}n(3n-5)\right)\in S(n)$ for each $n\in\N_{\geq 3}$, we know that
$$\limsup_{n\to +\infty} M(n)\leq 4.$$
We suppose that the condition $M(n)=3$ is rarely satisfied.

\begin{conj}
We have $\limsup_{n\to +\infty} M(n)=4.$
\end{conj}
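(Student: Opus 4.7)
Since $\left(\overline{1}_{n-3},2,n,\tfrac{1}{2}n(3n-5)\right)\in S(n)$ already gives $\limsup_{n\to+\infty}M(n)\leq 4$, the task is to establish $M(n)\geq 4$ for infinitely many $n$. For $n\geq 6$ Corollary \ref{ones} forces $1$ to appear, so a witness of $M(n)\leq 3$ must take the form $(\overline{1}_{c},\overline{x}_{a},\overline{y}_{b})$ with $c=n-a-b\geq 1$, $a,b\geq 1$, integers $2\leq x<y$, and by Lemma \ref{inumber} also $a+b\leq 2+\log_{2}\binom{n}{2}$. Iterating \eqref{sigmaid2} rewrites $\overline{X}_{n}\in S(n)$ as
\[
E_{a,b}(n,x,y)\;:\;x^{a}y^{b}=\binom{c}{2}+\binom{a}{2}x^{2}+\binom{b}{2}y^{2}+cax+cby+abxy,
\]
and the plan is to prove that the set $\mathcal{B}_{a,b}\subseteq\N$ of $n$ admitting an admissible solution has density zero for every relevant $(a,b)$, uniformly enough that $\N\setminus\bigcup_{a,b}\mathcal{B}_{a,b}$ is still infinite.

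For $a+b\geq 4$, regarded as an equation in $n$, $E_{a,b}=0$ is a quadratic with coefficients in $\Z[x,y]$, so each admissible pair $(x,y)$ produces at most two values of $n$. Combined with the height bound $x\leq 1+\lfloor 2(n-2)^{2/3}\rfloor$ from Corollary \ref{xn2ineq} and the square-sieve condition that the discriminant in $n$ be a perfect square, this should force $|\mathcal{B}_{a,b}\cap[1,N]|=O(N^{1-\delta})$ for some $\delta>0$ independent of $(a,b)$; summing over the $O(\log^{2}N)$ pairs with $4\leq a+b\leq 2+\log_{2}\binom{N}{2}$ keeps the total $o(N)$. The five small pairs $(a,b)\in\{(2,1),(1,2),(3,1),(1,3),(2,2)\}$ admit explicit rational parameterizations and produce more abundant families; for example $(a,b)=(2,1)$ gives
\[
y=\frac{\binom{n-3}{2}+x^{2}+2(n-3)x}{x^{2}-2x-(n-3)},
\]
yielding a valid solution whenever $n=m^{2}+1$ (so $x=m+1$ makes the denominator equal to $1$). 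My plan is to stratify solutions by the value of $x^{2}-2x-(n-3)$ (and its analogues for the other four pairs), show that each stratum contributes only a density-zero subset of $\N$, and conclude by an inclusion--exclusion over these strata.

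The chief obstacle is this last density-zero statement for the five small pairs: the numerators grow polynomially in $n$, so large-divisor strata could a priori contribute many values of $n$, and ruling this out requires careful size-versus-divisibility bookkeeping. A cleaner alternative would be to exhibit a modulus $M$ and a residue class $n\equiv r\pmod{M}$ for which $E_{a,b}(n,x,y)\equiv 0\pmod{M}$ has no solution with $x,y\not\equiv 0\pmod{M}$ simultaneously for all five small pairs; a single such $(M,r)$ would place an entire arithmetic progression inside the complement and, together with the $a+b\geq 4$ estimate, finish the argument. Finding such a universal $(M,r)$ is the critical step, since the five parameterizations differ in structure and a naive local obstruction may well fail on at least one of them; I expect it requires a genuinely arithmetic input such as quadratic reciprocity or a judicious prime choice, which is presumably why the statement has been left as a conjecture.
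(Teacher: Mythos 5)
The statement you are trying to prove is left as a \emph{conjecture} in the paper: the authors only establish the easy half, namely $\limsup_{n\to+\infty}M(n)\leq 4$ via the solution $\left(\overline{1}_{n-3},2,n,\tfrac{1}{2}n(3n-5)\right)$, and offer no argument for the lower bound. Your proposal correctly isolates what is missing --- infinitely many $n$ with no solution having at most three distinct entries --- and your reduction to the equations $E_{a,b}(n,x,y)=0$ (which I checked for $(a,b)=(2,1)$, including the family $n=m^2+1$) is sound. But the proposal is a plan, not a proof, and its two load-bearing steps are genuinely open. First, for $a+b\geq 4$ the observation that each pair $(x,y)$ yields at most two values of $n$ does not give density zero: with $x\leq 1+\lfloor 2(n-2)^{2/3}\rfloor$ and $y\leq\tfrac{1}{2}n(3n-5)$ the number of admissible pairs up to $N$ is polynomially larger than $N$, so ``two values of $n$ per pair'' bounds $|\mathcal{B}_{a,b}\cap[1,N]|$ by something like $N^{8/3}$, which is vacuous; the square-sieve step that would rescue this is named but not carried out. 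Second, for the five small pairs you yourself concede that neither the stratification argument nor the universal local obstruction $(M,r)$ is established, and the latter is doubtful: the family $n=m^2+1$ for $(a,b)=(2,1)$ (and its analogues for the other pairs) already shows these exceptional sets are infinite and not obviously confined away from any congruence class, so whether their union misses infinitely many $n$ is precisely the content of the conjecture.

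There is also a smaller structural gap: witnesses of $M(n)\leq 3$ include the two-distinct-value solutions $(\overline{1}_{n-k},\overline{x}_{k})$, which your ansatz with $2\leq x<y$ does not cover. The paper's own treatment of these reduces to integral points on the curves $C_k$ of genus $\lfloor (k-1)/2\rfloor$ and is itself only conjectural (Faltings gives finiteness for each fixed $k$ but nothing effective or uniform in $k$), so this case too would need a new idea. In short, your framework is a reasonable way to attack the problem and is consistent with the paper's discussion, but no step beyond the upper bound $\limsup M(n)\leq 4$ is actually proved, and the statement remains a conjecture.
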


In this context we also prove the following.

\begin{thm}
For each $m\in\N_{\geq 2}$ and each sequence $\overline{Y}_{m}=(y_{1}, y_{2},\ldots, y_{m})\in\N_{\geq 2}^{m}\backslash\{(2,2),(2,3)\}$ there are $k, Y\in\N_{+}$ such that for $n=k+m+1$ we have $(\overline{1}_{k},\overline{Y}_{m},Y)\in S(n)$. In particular,
$$
\limsup_{n\rightarrow+\infty}\max_{\overline{X}_n\in S(n)}|X_n|=+\infty.
$$
\end{thm}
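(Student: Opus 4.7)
The plan is to reduce the problem to a single linear condition produced by the symmetric-function identities \eqref{sigmaid1} and \eqref{sigmaid2}. Writing $A:=\sigma_1(\overline{Y}_m)$, $B:=\sigma_2(\overline{Y}_m)$ and $P:=y_1\cdots y_m$, and decomposing $(\overline{1}_k,\overline{Y}_m,Y)$ as $\overline{1}_k$ concatenated with $(\overline{Y}_m,Y)$, two applications of \eqref{sigmaid2} give
\[\sigma_n(\overline{1}_k,\overline{Y}_m,Y)=PY,\qquad \sigma_2(\overline{1}_k,\overline{Y}_m,Y)=\binom{k}{2}+k(A+Y)+AY+B,\]
so that \eqref{maineq} reduces to the single relation $(P-A-k)Y=kA+\binom{k}{2}+B$. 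My idea is to annihilate the denominator by choosing $k:=P-A-1$; then
\[Y=(P-A-1)A+\binom{P-A-1}{2}+B\]
is a manifestly positive integer, and since $A\geq y_m$ and $P-A-1\geq 1$ one has $Y\geq A\geq y_m$ for free. Hence $(\overline{1}_k,\overline{Y}_m,Y)$ is automatically nondecreasing and lies in $S(n)$ with $n=k+m+1$.

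The only non-automatic point -- and the place where the excluded pairs $(2,2)$ and $(2,3)$ actually enter -- is the inequality $P-A\geq 2$, which is exactly what ensures $k\in\N_+$. I expect this elementary estimate to be the main (and only) obstacle. For $m=2$ one uses the identity $P-A=(y_1-1)(y_2-1)-1$ to see that $P-A\geq 2$ holds precisely when $(y_1,y_2)\notin\{(2,2),(2,3)\}$. For $m\geq 3$ the bound is automatic: since each $y_i\geq 2$ we have $P\geq 2^{m-1}y_m$ and $A\leq m\,y_m$, so $P-A\geq (2^{m-1}-m)y_m\geq y_m\geq 2$, using $2^{m-1}\geq m+1$ for $m\geq 3$.

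For the ``in particular'' clause I would then feed the construction a family of sequences with many distinct coordinates, for instance $\overline{Y}_m:=(2,3,4,\ldots,m+1)$ for each $m\geq 3$. The resulting element of $S(n)$ contains $1$ together with the $m$ distinct integers $y_1<\cdots<y_m$, hence $|X_n|\geq m+1$, and letting $m\to+\infty$ yields $\limsup_{n\to+\infty}\max_{\overline{X}_n\in S(n)}|X_n|=+\infty$.
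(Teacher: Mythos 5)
Your proposal is correct and follows essentially the same route as the paper: both reduce \eqref{maineq} via \eqref{sigmaid1}--\eqref{sigmaid2} to the linear relation $(\sigma_m(\overline{Y}_m)-\sigma_1(\overline{Y}_m)-k)Y=\binom{k}{2}+k\sigma_1(\overline{Y}_m)+\sigma_2(\overline{Y}_m)$, make the same choice $k=\sigma_m(\overline{Y}_m)-\sigma_1(\overline{Y}_m)-1$, and use the same family $\overline{Y}_m=(2,3,\ldots,m+1)$ for the limsup claim. The only difference is that you prove the positivity of $k$ (the inequality $P-A\geq 2$, where the excluded pairs $(2,2)$ and $(2,3)$ enter) directly via $(y_1-1)(y_2-1)-1$ for $m=2$ and $P\geq 2^{m-1}y_m$ for $m\geq 3$, whereas the paper delegates this elementary fact to a reference; your version is self-contained and correct.
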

\begin{proof}
To find suitable values of $k, Y$ we consider the equation
$$
\sigma_{2}(\overline{1}_{k},\overline{Y}_{m},Y)=\sigma_{n}(\overline{1}_{k},\overline{Y}_{m},Y)=\sigma_{m}(\overline{Y}_{m})Y,
$$
which is equivalent with the equation
$$
\sigma_{2}(\overline{1}_{k})+\sigma_{2}(\overline{Y}_{m})+\sigma_{1}(\overline{1}_{k})\sigma_{1}(\overline{Y}_{m})+(\sigma_{1}(\overline{1}_{k})+\sigma_{1}(\overline{Y}_{m}))Y=\sigma_{m}(\overline{Y}_{m})Y,
$$
i.e.
$$
\binom{k}{2}+\sigma_{2}(\overline{Y}_{m})+k\sigma_{1}(\overline{Y}_{m})=(\sigma_{m}(\overline{Y}_{m})-k-\sigma_{1}(\overline{Y}_{m}))Y.
$$
Thus, by taking $k=\sigma_{m}(\overline{Y}_{m})-\sigma_{1}(\overline{Y}_{m})-1$ we get the corresponding value of $Y=\binom{k}{2}+\sigma_{2}(\overline{Y}_{m})+k\sigma_{1}(\overline{Y}_{m})$. Under our assumption on $\overline{Y}_{m}$ it is clear that $k>0$ (for an easy proof of this fact see \cite{Eck}) and we get required solution.

To get the second statement it is enough to take $\overline{Y}_{m}=(2,\ldots, m+1)$. Then $k=(m+1)!-\binom{m+2}{2}$ and $n=(m+1)!-\binom{m+2}{2}+m+1$. The corresponding element $\overline{X}_{n}\in S(n)$ satisfies $|X_{n}|=m+2$ and hence the result.
\end{proof}

\begin{prob}
For which $a\in\N_{\geq 4}$ the equation $\max_{\overline{X}_n\in S(n)}|X_n|=a$ has infinitely many solutions?
\end{prob}

Our expectation is that for each $a\in\N_{\geq 4}$ the equation $\max_{\overline{X}_n\in S(n)}|X_n|=a$ has infinitely many solutions.

\bigskip
We firmly believe that many results proved in this paper can be appropriately generalized in the context of the Diophantine equation
$$
\sigma_{i}(\overline{X}_{n})=\sigma_{n}(\overline{X}_{n}),
$$
where $i\in\{3,\ldots, n\}$ is fixed. However, even the case $i=3$ will require new ideas. For example, based on numerical calculations we formulate the following.

\begin{conj}
Let $n\in\N_{\geq 4}$ and suppose that $m\in\N$ satisfies
$$
m=\sigma_{3}(\overline{X}_{n})=\sigma_{n}(\overline{X}_{n}).
$$
Then
\begin{align*}
 m\leq \frac{1}{12} &\ \left(3021 n^6-77575 n^5+920361 n^4-6235705  n^3\right.\\
   &\ \left. \quad+24764202 n^2-53664304n+48986640\right)
\end{align*}
and
$$
x_{n}\leq \frac{1}{12} \left(27 n^4-190 n^3+471 n^2-500 n+216\right).
$$
\end{conj}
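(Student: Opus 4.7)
The plan is to adapt the scheme of Theorem \ref{mainbound} to the equation $\sigma_3(\overline{X}_n) = \sigma_n(\overline{X}_n)$. Using the splitting identity
$$\sigma_3(\overline{X}_{n-2},x_{n-1},x_n) = \sigma_3(\overline{X}_{n-2}) + (x_{n-1}+x_n)\sigma_2(\overline{X}_{n-2}) + x_{n-1}x_n\sigma_1(\overline{X}_{n-2})$$
together with $\sigma_n(\overline{X}_n)=x_{n-1}x_n\sigma_{n-2}(\overline{X}_{n-2})$, the equation rewrites as
$$(\sigma_{n-2}(\overline{X}_{n-2})-\sigma_1(\overline{X}_{n-2}))\,x_{n-1}x_n - \sigma_2(\overline{X}_{n-2})(x_{n-1}+x_n) = \sigma_3(\overline{X}_{n-2}).$$
Setting $A:=\sigma_{n-2}(\overline{X}_{n-2})-\sigma_1(\overline{X}_{n-2})$ — necessarily positive for any solution, a constraint that already rules out ``flat'' $(n-2)$-tuples of the form $(\overline{1}_{n-3},k)$ — a Vieta-style multiplication by $A$ factors the equation as
$$\bigl(Ax_{n-1}-\sigma_2(\overline{X}_{n-2})\bigr)\bigl(Ax_n-\sigma_2(\overline{X}_{n-2})\bigr) = A\sigma_3(\overline{X}_{n-2}) + \sigma_2(\overline{X}_{n-2})^2,$$
parametrizing $x_{n-1}, x_n$ by divisor pairs of the right-hand side. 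This is the analog of Theorem \ref{sols} and makes the extremal direction visible: small $A$ produces large $x_n$.

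Next I would re-derive the estimates of Lemma \ref{upperboundforxn-2}, Lemma \ref{estimate} and Corollary \ref{estimate2} in the new setting. The inequality $\sigma_3(\overline{X}_n)\leq\binom{n}{3}x_{n-2}x_{n-1}x_n$ gives $x_1\cdots x_{n-3}\leq\binom{n}{3}$, and the pigeonhole argument on products of $y_j\geq 2$ yields
$$\sigma_k(\overline{Y}_{i-2})\leq\frac{\binom{i-2}{k}}{2^{i-2-k}}\sigma_{i-2}(\overline{Y}_{i-2}),\quad k\in\{1,2,3\},$$
which through the $\sigma_3$-extensions of \eqref{sigmaid1} and \eqref{sigmaid2} delivers affine upper bounds on each of $\sigma_1$, $\sigma_2$, $\sigma_3$ of $\overline{X}_{n-2}$ in terms of $\sigma_{n-2}(\overline{X}_{n-2})$. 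Substituting these into $x_n=(\sigma_2+d_2)/A$ and $m=\sigma_{n-2}x_{n-1}x_n$, and splitting on the size of $\sigma_{n-2}(\overline{X}_{n-2})$ against a suitable threshold as in Cases~I and~II of Theorem \ref{mainbound}, the problem reduces to maximizing two explicit rational functions over an interval whose maxima should yield the polynomial coefficients in the conjecture. The extremal configuration — which for $n=4$ is $(2,3,7,42)$, saturating both the $x_n$-bound $42$ and the $m$-bound $1764$ — should then be identified and verified by direct substitution.

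The main obstacle is twofold. First, the positivity constraint $A>0$ interacts non-trivially with the classification: unlike in Theorem \ref{mainbound}, the extremal number of non-$1$ coordinates is no longer automatically the smallest, and a direct computation shows that $S_3(n)$ for the equation $\sigma_3=\sigma_n$ is empty for $n\geq 4$, so the extremum must come from $S_i(n)$ with $i\geq 4$, with the optimal pattern of non-$1$ entries potentially depending on residue conditions on $n$. Second, the rational functions to be maximized have degree roughly six in the key parameters rather than four, so interior critical points can compete with endpoints and the case boundary (previously $\sigma_{n-2}=n$) may need recalibration. Together these mean that recovering the \emph{exact} polynomial coefficients in the conjecture — as opposed to bounds of the correct degree — will likely require a hybrid approach: a computer-assisted search to pin down the extremal family for small $n$, a structural argument promoting it to all $n\geq 4$, and a final per-case verification that every competing configuration is strictly dominated, in the spirit of the equality-case remark following Theorem \ref{mainbound}.
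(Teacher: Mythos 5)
The statement you are proving is not a theorem of the paper at all: it is stated as a conjecture, formulated ``based on numerical calculations'', and the authors explicitly write that even the case $i=3$ of $\sigma_i(\overline{X}_n)=\sigma_n(\overline{X}_n)$ ``will require new ideas''. So there is no proof in the paper to compare against, and your submission, read on its own terms, is a research programme rather than a proof. Your structural groundwork is correct and genuinely parallels Section \ref{sec2}: the splitting identity for $\sigma_3$, the Vieta factorization $\bigl(Ax_{n-1}-\sigma_2(\overline{X}_{n-2})\bigr)\bigl(Ax_n-\sigma_2(\overline{X}_{n-2})\bigr)=A\sigma_3(\overline{X}_{n-2})+\sigma_2(\overline{X}_{n-2})^2$ with $A=\sigma_{n-2}(\overline{X}_{n-2})-\sigma_1(\overline{X}_{n-2})$, the necessity of $A>0$, the emptiness of $S_3(n)$ for this equation when $n\geq 4$, and the fact that $(2,3,7,42)$ saturates both bounds at $n=4$ all check out. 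But every quantitative step that would actually produce the polynomials in the conjecture --- the affine bounds on $\sigma_1,\sigma_2,\sigma_3$ of $\overline{X}_{n-2}$, the maximization of the resulting rational functions, the identification of the extremal configuration for all $n$, and the verification that nothing else competes --- is deferred, and you say so yourself. A plan with its hard parts flagged as open is not a proof.

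The deepest of those deferred difficulties deserves to be named more sharply than you do. In Theorem \ref{mainbound} the denominator $\sigma_{n-2}(\overline{X}_{n-2})-1$ is controlled from below by the clean dichotomy of Lemma \ref{lowerboundforxn-2}, and the extremal family $\left(\overline{1}_{n-3},2,n,\tfrac{1}{2}n(3n-5)\right)$ exists uniformly in $n$. Here the extremum is governed by how small $A$ can be, and at your own extremal point $(2,3,7,42)$ one has $A=1$; yet the naive extension $\overline{X}_{n-2}=(\overline{1}_{n-4},2,3)$ gives $A=6-(n+1)=5-n<0$ for $n\geq 6$, so that family simply does not persist. The configurations achieving $A=1$ (or $A$ small) with $\sigma_2(\overline{X}_{n-2})$ as large as possible therefore change shape with $n$, which is precisely why the conjectured coefficients are not visibly the maximum of any single two-case optimization. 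Until you can classify the minimizers of $A$ subject to $\overline{X}_{n-2}$ extending to a solution, and show they dominate all other $S_i(n)$, the argument does not close; this is the ``new idea'' the authors say is missing, and it is missing from your proposal as well.
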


\bigskip

\noindent {\bf Acknowledgements.} We are grateful to {\L}ukasz Pa\'{n}kowski and B{\l}a\.{z}ej \.{Z}mija for interesting discussions concerning various approaches to Theorem \ref{mainbound}  .

\vskip 1cm

\noindent Piotr Miska, Jagiellonian University, Faculty of Mathematics and Computer Science, Institute of
Mathematics, {\L}ojasiewicza 6, 30-348 Krak\'ow, Poland; email:
piotr.miska@uj.edu.pl

\vskip 1cm

\noindent Maciej Ulas, Jagiellonian University, Faculty of Mathematics and Computer Science, Institute of
Mathematics, {\L}ojasiewicza 6, 30-348 Krak\'ow, Poland; email:
maciej.ulas@uj.edu.pl

\end{document}